\documentclass[12pt]{amsart}

\usepackage[a4paper,margin=32mm]{geometry}
\usepackage{amsmath,amssymb,amsthm,mathtools}
\usepackage[colorlinks=true,linkcolor=blue,citecolor=blue,urlcolor=blue]{hyperref}
\usepackage[nameinlink,capitalize]{cleveref}

\newtheorem{theorem}{Theorem}[section]
\newtheorem{proposition}[theorem]{Proposition}
\newtheorem{corollary}[theorem]{Corollary}
\newtheorem{lemma}[theorem]{Lemma}

\newtheorem{problem}[theorem]{Problem}
\theoremstyle{definition}
\newtheorem{definition}[theorem]{Definition}
\newtheorem{example}[theorem]{Example}
\theoremstyle{remark}
\newtheorem{remark}[theorem]{Remark}

\newcommand{\kk}{\mathbf{k}}
\newcommand{\im}{\operatorname{im}}
\newcommand{\Hilb}{\operatorname{Hilb}}
\newcommand{\Span}{\operatorname{span}}
\newcommand{\Match}{\operatorname{Match}}
\newcommand{\rank}{\operatorname{rank}}

\newcommand\arxiv[1]{\href{https://arxiv.org/abs/#1}{\texttt{arXiv:#1}}}

\title[Matching and Dunkl matching algebras]{Matching algebras and their Dunkl subalgebras}

\author[D. Grinberg]{Darij Grinberg}
\address{Department of Mathematics, Drexel University, Philadelphia, PA, USA}
\email{darijgrinberg@gmail.com}

\author[A. Kirillov]{Anatol Kirillov}
\address{Yanqi Lake Beijing Institute of Mathematical Sciences and Applications, Huairou District, Beijing, China}
\email{kirillov@bimsa.cn}

\author[B. Shapiro]{Boris Shapiro}
\address{Department of Mathematics, Stockholm University, Stockholm, Sweden}
\email{shapiro@math.su.se}

\date{27 September 2026}

\subjclass[2020]{Primary 05E10, 05C70; Secondary 13A02, 05B35}
\keywords{matching, Dunkl element, Specht polynomial, graph connectivity,
Specht module, Lefschetz property, zonotopal algebra, Tutte polynomial}

\begin{document}
\raggedbottom

\begin{abstract}
For a finite simple graph $G = (V, E)$, we consider the quotient algebra
$\mathcal M(G)$ of the polynomial ring in the variables $u_e$ (for $e \in E$)
by the ideal generated by all products $u_e u_f$ for non-disjoint edges
$e$ and $f$ (this includes all squares $u_e^2$).
This quotient is called the matching algebra of $G$, since it has a basis
indexed by the matchings of $G$.
In this quotient, we define a subalgebra $\mathcal D(G)$ generated by
the signed incidence sums $\theta_v=\sum_{e=(v,w)}u_e-\sum_{e=(w,v)}u_e$
for all $v \in V$ (where all edges of $G$ are oriented arbitrarily);
we call this the Dunkl matching algebra.

We show that, as a graded vector space, $\mathcal D(G)$ is dual to the span of all
polynomials $p_M = \prod_{(i,j) \in M} (x_i - x_j)$,
where $M$ ranges over all matchings of $G$.
For the complete graph $K_n$, the latter span is a direct sum of
two-row Specht modules (one in each degree);
thus its Hilbert series is that of the Catalan triangle, and, in
characteristic zero, the Dunkl matching algebra can be presented by
linear and quadratic relations.
For arbitrary graphs, we formulate the saturation
problem of deciding when the selected matching Specht generators $p_M$
in a given degree $k$ span the full two-row Specht module $S^{(n-k,k)}$.
We show that saturation in degree $k$ forces
$k$-connectivity, that saturation in degree $2$ is equivalent to
$2$-connectivity, and that $k$-linked graphs are saturated in degree $k$.
We also prove saturation in every possible degree whenever the complement of
$G$ is a matching.  We show that the Dunkl matching algebra equals
the full matching algebra exactly for forests, and give an explicit Hilbert
series formula for unicyclic graphs.  Finally, we compare with a related
Postnikov--Shapiro quotient: depending on the ambient algebra, its Hilbert
series is either the central zonotopal specialization of the Tutte polynomial
or a regraded all-terminal reliability polynomial.
\end{abstract}

\maketitle

\section{Introduction}

Let $G=(V,E)$ be a finite simple graph.  The matching generating polynomial
of $G$ records the numbers of sets of pairwise vertex-disjoint edges.  In this
note we lift this elementary invariant to a graded algebra
\[
  \mathcal M(G)=\kk[u_e:e\in E]/(u_eu_f:e\cap f\ne\varnothing).
\]
In this algebra, we define the signed incidence sums
\[
  \theta_v=\sum_{e=(v,w)}u_e-\sum_{e=(w,v)}u_e
  \qquad \text{ for all } v \in V,
\]
where an orientation of $G$ has been chosen.  We denote by $\mathcal D(G)$
the subalgebra generated by the $\theta_v$ and call it the \emph{Dunkl
matching algebra}.
The goal of the present note is to understand this algebra
$\mathcal D(G)$, in particular its Hilbert series.

The homogeneous parts $\mathcal D(G)_k$ of $\mathcal D(G)$
stand in a natural duality to certain subspaces of the polynomial ring
$\kk[x_v \mid  v\in V]$.
Namely, for each matching $M$, define the polynomial
\[
  p_M = p_M(x)=\prod_{(i,j) \in M}(x_i-x_j) \in \kk[x_v \mid  v\in V].
\]
For any $k \geq 0$, we define the vector subspace
\[
  P_k(G):=\Span_\kk\{p_M:M\text{ is a $k$-matching of $G$}\}
  \qquad \text{ of } \kk[x_v \mid  v\in V].
\]
The main results we shall show are the following:

\begin{itemize}
\item
For each $k \geq 0$, we have
\[
  \dim \mathcal D(G)_k=\dim P_k(G);
\]
see \cref{thm:product-realization}.
More precisely, the proof gives a canonical perfect pairing between
$\mathcal D(G)_k$ and $P_k(G)$
(see \cref{rmk:duality});
thus, $P_k(G)$ becomes a dual polynomial model for
$\mathcal D(G)_k$.
Consequently, the Hilbert function of $\mathcal D(G)$ records the ranks of the
matching-restricted products $\prod_{(i,j)\in M}(x_i-x_j)$.

\item
For the complete graph $K_n$, the dual polynomial model becomes classical
representation theory: for $0\leq k\leq\lfloor n/2\rfloor$,
\[
  P_k(K_n)\cong S^{(n-k,k)}.
\]
Here $S^{(n-k,k)}$ is the Specht module of shape $(n-k,k)$
(and is irreducible in characteristic zero).
Thus, the Hilbert series of $\mathcal D(K_n)$ is the
generating function of the $n$-th row of the Catalan triangle; see
\cref{thm:complete-specht}.  In characteristic zero, we also obtain the
presentation
\[
 \mathcal D(K_n)\cong
 \kk[z_1,\ldots,z_n]/
 (z_1+\cdots+z_n,z_1^2,\ldots,z_n^2);
\]
see \cref{thm:presentation}.

\item
Motivated by the complete-graph case, we say that a graph $G$
is \emph{$k$-saturated} when $P_k(G)=P_k(K_n)$.  We prove the implications
\[
 k\text{-linked}
 \ \Longrightarrow\
 k\text{-saturated}
 \ \Longrightarrow\
 k\text{-connected};
\]
see \cref{prop:k-linked,prop:k-connected}.  In degree $2$ the second
implication is an equivalence:
\[
 P_2(G)=P_2(K_n)
 \quad\Longleftrightarrow\quad
 G\text{ is $2$-connected}
\]
for $n\ge4$; see \cref{thm:degree-two-saturation}.  We also show that if
$\overline G$ is a matching, then $G$ is saturated in every possible degree;
see \cref{thm:matching-complement}.

\item
The matching generating polynomial and the Dunkl Hilbert series of $G$
(that is, the Hilbert series of $\mathcal D(G)$) are
incomparable invariants: neither determines the other.  Indeed, $P_3$ and
$K_3$ have the same Dunkl Hilbert series $1+2t$ but different matching
generating polynomials $1+2t$ and $1+3t$, respectively; in the other
direction, \cref{prop:matching-polynomial-counterexample} gives graphs with
the same matching generating polynomial but different Dunkl Hilbert series.
We also obtain explicit positive formulas in sparse cases.  Namely,
\[
  \mathcal D(G)=\mathcal M(G)
  \quad\Longleftrightarrow\quad
  G\text{ is a forest};
\]
see \cref{prop:forest-full}.  For a unicyclic graph with unique cycle $C$ and
$F=G-V(C)$, we obtain a short exact sequence of graded vector spaces
\[
 0\longrightarrow \mathcal M(F)(-1)
 \longrightarrow \mathcal M(G)
 \longrightarrow \bigoplus_{k\ge0}P_k(G)
 \longrightarrow0,
\]
and hence (using the notation $\mathfrak m_G$ for the matching
generating polynomial of $G$)
\[
 \Hilb(\mathcal D(G),t)
 =\mathfrak m_G(t)-t\mathfrak m_F(t);
\]
see \cref{thm:unicyclic-hilbert}.  Minimal pairs with the same matching generating
polynomial but different Dunkl Hilbert series occur on $4$ vertices in
general, on $5$ vertices among connected graphs, and on $6$ vertices among
$2$-connected graphs; see
\cref{prop:matching-polynomial-counterexample,rmk:matching-counterexample-minimality}.

\item
Finally, we survey a related construction in the larger square-zero edge
algebra \cite{PS}, where distinct incident edge generators are allowed to multiply.
For each nontrivial vertex cut $\delta(S)$, consider the cut monomial
\[
 \eta_S:=\prod_{e\in\delta(S)}u_e,
\]
and impose the relations $\eta_S=0$ for all such $S$.  If these relations
are imposed inside the subalgebra generated by the signed incidence sums,
the resulting quotient is the central graphical zonotopal algebra.  If they
are imposed instead in the whole square-zero edge algebra, the surviving
monomials are precisely the $u_F$ for which deleting $F$ leaves $G$
connected; hence the Hilbert series counts connected spanning subgraphs and
is a specialization of the all-terminal reliability polynomial.  See
\cref{prop:central-quotient,prop:reliability-quotient}; these results are
not new, so we provide no proofs.
\end{itemize}

\medskip
\textbf{Related work.}
Products of arbitrary subfamilies of a vector configuration have a rich
theory related to Tutte polynomials; see Berget~\cite{Berget} and the
zonotopal literature~\cite{AP,HR}.  Our restriction to vertex-disjoint
subfamilies is not matroidal: it remembers the incidence of edges at vertices.
It also differs from graph Specht modules and their relationship with matching
polytopes studied by Liu~\cite{LiuMatching}.  The construction lies naturally
between these two subjects.

Several nearby constructions attach graded objects to matchings but use a
different algebraic mechanism.  Numata~\cite{Numata} studies an Artinian
Gorenstein algebra obtained by apolarity from a weighted matching generating
function and proves a strong Lefschetz property.  Li~\cite{Li} proves strong
equivariant log-concavity for the graded permutation representation on the
matchings of an arbitrary graph.  For perfect matchings of complete graphs,
Filmus and Lindzey~\cite{FL} develop harmonic polynomial models involving
Specht polynomials and matching-inclusion matrices.  Bedratyuk's graph
algebra~\cite{Bedratyuk}, whose basis is indexed by all simple graphs on a
fixed vertex set, also carries Boolean up/down operators and two-row Specht
modules.  Bergeron, Chan, Soltani, and Zabrocki~\cite{BCSZ} study a
construction deceptively parallel to our $P_k(K_n)$ in an exterior algebra: products of differences
$\xi_j-\xi_i$ indexed by noncrossing pairings give bases of exterior
quasisymmetric harmonic spaces whose degree-$k$ dimensions agree with $\dim P_k(K_n)$.  We compare the two constructions more closely in
\cref{rem:exterior-analogue}.  None of these constructions is the
incidence-generated subalgebra $\mathcal D(G)\subseteq\mathcal M(G)$ studied
here.

There is one direct overlap at the level of the abstract complete-graph
presentation.  Jonsson Kling, Lundqvist, Mohammadi, Orth, and
S\'{a}enz-de-Cabez\'{o}n~\cite{JKLMOS} study the more general almost complete
intersections
\[
 \kk[z_1,\ldots,z_n]/(z_1^2,\ldots,z_n^2,(z_1+\cdots+z_n)^q).
\]
The quotient occurring in our characteristic-zero presentation of
$\mathcal D(K_n)$ is their case $q=1$.
The contribution here is therefore not the abstract quotient itself, but its
realization inside a matching algebra, its matching-product duality, and the
resulting saturation problem for arbitrary graphs.

The adjective ``Dunkl'' is motivational only.  Unlike the
Fomin--Kirillov algebra~\cite{FK}, our matching algebra is commutative and
kills every product of two incident edge generators.

\medskip
\textbf{AI use statement.}
The bulk of this paper was written by GPT-5.6 based on old notes
(which contained more questions than proofs) and some tactical prompting;
in particular, GPT-5.6 contributed all the proofs in Section 5.
We have read and verified the entire text and edited it for
clarity and style.

\medskip
\textbf{Acknowledgments.}
We thank Per Alexandersson for inspiring conversations early on in the
project.

\medskip
\textbf{Notations.}
Throughout, $\kk$ is a field.
In some places, we will additionally assume that $\kk$ has characteristic zero.

\section{The matching algebra}

Let $G = (V, E)$ be a finite simple graph.
Its edges are $2$-element sets of vertices;
thus, two edges $e,f$ satisfy $e\cap f \ne\varnothing$
if and only if they have a common endpoint (this allows $e=f$).
Two edges $e$ and $f$ are said to be \emph{incident} if
$e\cap f \ne\varnothing$.

\begin{definition}
The \emph{matching algebra} of $G$ is the commutative $\kk$-algebra
\[
 \mathcal M(G):=
 \kk[u_e:e\in E]/\bigl(u_eu_f:e\cap f\ne\varnothing\bigr).
\]
That is, $\mathcal M(G)$ is the quotient of the polynomial ring
$\kk[u_e:e\in E]$ by the monomial ideal generated by all products
$u_eu_f$ with $e$ and $f$ being two incident edges
(including the case $e=f$).
It is graded by degree, where every generator $u_e$ has degree $1$.
\end{definition}

For a matching $M\subseteq E$ of $G$, define the element
\[
 u_M:=\prod_{e\in M}u_e \in \mathcal M(G),
 \qquad \text{ in particular }
 u_\varnothing:=1.
\]

\begin{proposition}\label{prop:matching-basis}
The elements $u_M$, indexed by the matchings $M$ of $G$, form a homogeneous
$\kk$-basis of $\mathcal M(G)$.  Consequently, the Hilbert series of
$\mathcal M(G)$ is
\[
 \Hilb(\mathcal M(G),t)=\sum_{k\geq0}m_k(G)t^k,
\]
where $m_k(G)$ is the number of $k$-matchings of $G$.
\end{proposition}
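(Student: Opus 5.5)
The plan is to use the standard fact that, for a monomial ideal $I$ in a polynomial ring, the monomials not lying in $I$ project to a $\kk$-basis of the quotient. Let $I=(u_eu_f:e\cap f\ne\varnothing)$. First we observe that a monomial lies in $I$ if and only if it is divisible by some generator $u_eu_f$. This holds because $I$ is spanned as a $\kk$-vector space by the monomials divisible by a generator. Indeed, every element of $I$ is a combination $\sum g_i\,u_{e_i}u_{f_i}$, and expanding each $g_i$ into monomials writes it as a combination of such monomials. Conversely, a polynomial lies in $I$ exactly when each of its monomials is divisible by some generator.

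Next we identify the monomials that avoid $I$. A monomial $\prod_e u_e^{a_e}$ is not divisible by any $u_e^2$ exactly when every $a_e\le 1$. It is then not divisible by any $u_eu_f$ with $e\ne f$ incident exactly when the support $\{e:a_e=1\}$ consists of pairwise disjoint edges. So the standard monomials are precisely the squarefree monomials $\prod_{e\in M}u_e$ with $M$ a matching, whose images are the $u_M$.

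It remains to show that these images form a basis. For spanning, every monomial either lies in $I$ or equals some $u_M$. For linear independence, suppose $\sum_M c_M\prod_{e\in M}u_e\in I$. By the description of $I$ above, each monomial appearing with nonzero coefficient must be divisible by a generator. No standard monomial is, so all $c_M=0$.

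Finally, each $u_M$ is homogeneous of degree $|M|$, so the degree-$k$ component of $\mathcal M(G)$ has dimension $m_k(G)$, which gives the Hilbert series. The only step needing any care is the description of membership in a monomial ideal. It is routine, so I expect no real obstacle.
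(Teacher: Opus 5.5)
Your proposal is correct and follows essentially the same route as the paper: both identify the monomials outside the monomial ideal $(u_eu_f : e\cap f\neq\varnothing)$ as exactly the squarefree products $\prod_{e\in M}u_e$ over matchings $M$, so their images $u_M$ form a homogeneous basis of the quotient. The only difference is that you also prove the standard fact that a polynomial lies in a monomial ideal if and only if each of its monomials is divisible by a generator, which the paper takes for granted.
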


\begin{proof}
The defining ideal $\bigl(u_eu_f:e\cap f\ne\varnothing\bigr)$ is monomial.
Its standard monomials are the square-free
products $u_{e_1} u_{e_2} \cdots u_{e_k}$ of generators
such that no two of the edges $e_1, e_2, \ldots, e_k$ have
a common endpoint; these are exactly the $u_M$
corresponding to the matchings $M$ of $G$.
Thus, these $u_M$ (or, rather, their residue classes) form
a basis of the quotient algebra $\mathcal M(G)$.
The formula for the Hilbert series follows directly from this.
\end{proof}

Thus, for the complete graph $K_n$, we have
\[
 \Hilb(\mathcal M(K_n),t)
 =\sum_{0\leq k\leq n/2}
 \frac{n!}{2^k k!(n-2k)!}t^k,
\]
and the complete bipartite graph $K_{a,b}$ satisfies
\[
 \Hilb(\mathcal M(K_{a,b}),t)
 =\sum_{k\geq0}k!\binom ak\binom bk t^k.
\]
In particular, $\dim\mathcal M(K_n)$ is the number of involutions in $S_n$.

Equivalently, if $\Delta(G)$ denotes the matching complex of $G$ (the
independence complex of the line graph $L(G)$), then $\mathcal M(G)$ is the
Artinian monomial quotient obtained from its Stanley--Reisner ring
$\kk[\Delta(G)]$ by adjoining the squares $u_e^2$ to the Stanley--Reisner
ideal.  Thus $\mathcal M(G)$ itself is not a Stanley--Reisner ring in the
usual sense.  Matching complexes and their face rings are classical; see, for
example, Bj\"orner--Lov\'asz--Vre\'cica--\v{Z}ivaljevi\'c~\cite{BLVZ}
and Jonsson~\cite[Chapter~5]{Jonsson}.  Ferrarello and Fr\"oberg~\cite{FF}
explicitly relate the Hilbert series of the ordinary Stanley--Reisner ring of
a matching complex to graph-subgraph polynomials.

More directly, Dao and Nair~\cite{DaoNair} associate to an arbitrary
simplicial complex $\Delta$ the square-zero face algebra
\[
 A(\Delta):=\kk[x_1,\ldots,x_m]/(I_\Delta,x_1^2,\ldots,x_m^2),
\]
and study its Lefschetz properties.  Thus our matching algebra is exactly
$\mathcal M(G)=A(\Delta(G))$.  Such algebras are also a special case of the
quadratic monomial quotients studied by Migliore, Nagel, and
Schenck~\cite{MNS}.  What we shall study (starting in the next section) is
a certain \textbf{subalgebra} $\mathcal D(G)$ of $\mathcal M(G)$,
which -- to our knowledge -- is new.

\begin{example}\label{ex:matching-p4}
Let $G=P_4$ be the path $1-2-3-4$.  Then
\[
 \mathcal M(P_4)
 =\kk[u_{12},u_{23},u_{34}]/
 (u_{12}^2,u_{23}^2,u_{34}^2,u_{12}u_{23},u_{23}u_{34}).
\]
Thus
\[
 1,\quad u_{12},\quad u_{23},\quad u_{34},\quad u_{12}u_{34}
\]
is a basis of $\mathcal M(P_4)$.
The only nonzero product of two edge generators is
$u_{12}u_{34}$, so
\[
 \Hilb(\mathcal M(P_4),t)=1+3t+t^2,
 \qquad
 (u_{12}+u_{34})^2=2u_{12}u_{34}.
\]
By contrast, for the triangle $K_3$ every two edges meet, so
$\mathcal M(K_3)$ has basis $1,u_{12},u_{13},u_{23}$ and its positive-degree
ideal has square zero.
\end{example}

\section{Signed incidence sums and the Dunkl matching algebra}

Fix an orientation of $G$.  Define \emph{incidence coefficients}
$\varepsilon_{t,e}$ for all $e \in E$ and $t \in V$ as follows:
If an edge $e$ is oriented from $v$ to $w$, let
\[
\varepsilon_{v,e}=1, \qquad
\varepsilon_{w,e}=-1,
\]
and let all other incidence
coefficients $\varepsilon_{t,e}$ (with $t \in V \setminus \{v,w\}$) be zero.

\begin{definition}
For $v\in V$, define the \emph{signed incidence sum}
\[
 \theta_v=\sum_{e\in E}\varepsilon_{v,e}u_e \in \mathcal M(G).
\]
The \emph{Dunkl matching algebra} is the graded subalgebra
$\mathcal D(G)$ of $\mathcal M(G)$ generated by these
signed incidence sums. In other words:
\[
 \mathcal D(G):=\kk[\theta_v:v\in V]\subseteq\mathcal M(G).
\]
\end{definition}

Reversing the orientation of an edge amounts to replacing the corresponding
generator $u_e$ by $-u_e$; hence the isomorphism type of $\mathcal D(G)$ is
orientation-independent.  We have
\begin{equation}\label{eq:elementary-relations}
 \theta_v^2=0,
 \qquad
 \sum_{v\in C}\theta_v=0
\end{equation}
for every vertex $v$ and every connected component $C$ of $G$.

Consider the polynomial ring $\kk[x_v \mid v \in V]$.  If an edge $e$ is
oriented as $e=(i,j)$, define the linear polynomial $p_e:=x_i-x_j$.
For a matching $M$ of $G$, define the polynomial
\begin{equation}\label{eq:def-pM}
 p_M:=\prod_{e\in M}p_e
 =\prod_{e=(i,j)\in M}(x_i-x_j),
 \qquad \text{ so in particular } p_\varnothing:=1.
\end{equation}
Reversing the orientation of an edge changes the corresponding $p_e$ and
$p_M$ only by a sign, so all spans considered below are
orientation-independent.

For each integer $k \geq 0$, we let $\Match_k(G)$ denote the set of
all $k$-matchings of $G$ (that is, of all matchings with
exactly $k$ edges), and we define the $\kk$-vector subspace
\begin{equation}\label{eq:def-Pk}
 P_k(G):=\Span\{p_M:M\in\Match_k(G)\}
 \qquad \text{of $\kk[x_v \mid v \in V]$.}
\end{equation}

Consider the $\kk$-vector space $\kk^V$ with basis $V$; its standard
basis vectors will be denoted by $e_v$.
Let $S^k(\kk^V)$ denote its $k$th symmetric power.  Multiplication gives a
linear map
\[
 \mu_k:S^k(\kk^V)\longrightarrow \mathcal M(G)_k,
 \qquad
 e_{v_1}\cdots e_{v_k}\longmapsto
 \theta_{v_1}\cdots\theta_{v_k},
\]
whose image is precisely the $k$-th graded component $\mathcal D(G)_k$
of $\mathcal D(G)$.

\begin{theorem}[Dunkl-polynomial duality]\label{thm:product-realization}
For every $k\geq0$, we have
\begin{align}\label{eq:dimD=dimP}
 \dim\mathcal D(G)_k=\dim P_k(G).
\end{align}
Moreover, for $\alpha=(\alpha_v)_{v\in V}\in\mathbb{N}^V$ with
$|\alpha|=k$, write
\[
 e^\alpha:=\prod_{v\in V}e_v^{\alpha_v} \in S^k(\kk^V),
 \qquad
 x^\alpha:=\prod_{v\in V}x_v^{\alpha_v} \in \kk[x_v \mid v \in V].
\]
The elements $e^\alpha$ form a basis of $S^k(\kk^V)$.  Let
$(e^\alpha)^*$ denote the dual basis, and define the linear isomorphism
\[
 \iota_k:S^k(\kk^V)^*\longrightarrow\kk[x_v:v\in V]_k,
 \qquad
 (e^\alpha)^*\longmapsto x^\alpha.
\]
Then, for every $M\in\Match_k(G)$, we have
\begin{align}\label{eq:iotamuu}
 \iota_k\bigl(\mu_k^*(u_M^*)\bigr)=p_M,
\end{align}
where $u_M^* \in (\mathcal M(G)_k)^*$ denotes extraction of
the $u_M$-coefficient (i.e., the $\kk$-linear map that sends each
polynomial in $\mathcal M(G)_k$ to its $u_M$-coefficient).
\end{theorem}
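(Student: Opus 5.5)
The plan is to prove the coefficient identity \eqref{eq:iotamuu} first and then obtain \eqref{eq:dimD=dimP} from it by linear algebra.

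\textbf{Step 1: the coefficient identity.} By definition, $\iota_k(\mu_k^*(u_M^*))=\sum_{|\alpha|=k}c_\alpha x^\alpha$, where $c_\alpha$ is the $u_M$-coefficient of $\mu_k(e^\alpha)=\prod_v\theta_v^{\alpha_v}$ in $\mathcal M(G)_k$. To compute $c_\alpha$, write $\alpha$ as a multiset $\{v_1,\dots,v_k\}$ and expand $\theta_{v_1}\cdots\theta_{v_k}$ in the polynomial ring $\kk[u_e]$. A term $\prod_s\varepsilon_{v_s,e_s}u_{e_s}$ survives in $\mathcal M(G)$ as $u_M$ exactly when $(e_1,\dots,e_k)$ is an ordering of the $k$ distinct edges of $M$. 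Since every term has a factor $\varepsilon_{v_s,e_s}$, it is nonzero only if $v_s\in e_s$ for each $s$.

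So, grouping the factors of $u_M$ by edges: $c_\alpha$ is the sum, over all functions $\phi:M\to V$ with $\phi(e)\in e$ whose multiset of values is $\alpha$, of $\prod_{e\in M}\varepsilon_{\phi(e),e}$. The multinomial bookkeeping has to be checked here. Because the edges of $M$ are disjoint, each such $\phi$ is injective, so $\alpha$ is a $0/1$ vector. Moreover, $\phi$ is determined by its image. Hence $\mu_k(e^\alpha)$ contains $u_M$ with coefficient $\prod_e\varepsilon_{\phi(e),e}$ exactly once, with no extra multiplicity.

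On the other side, expanding $p_M=\prod_{e=(i,j)}(\varepsilon_{i,e}x_i+\varepsilon_{j,e}x_j)$ gives $\sum_\phi\prod_e\varepsilon_{\phi(e),e}x_{\phi(e)}$, over the same set of functions $\phi$. Comparing the two expansions coefficient by coefficient yields \eqref{eq:iotamuu}.

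\textbf{Step 2: the dimension equality.} By \cref{prop:matching-basis}, the functionals $u_M^*$ for $M\in\Match_k(G)$ form a basis of $(\mathcal M(G)_k)^*$. Hence $\im(\mu_k^*)$ is spanned by the elements $\mu_k^*(u_M^*)$. Applying the isomorphism $\iota_k$ and using Step 1, we get $\iota_k(\im\mu_k^*)=P_k(G)$, and therefore
\[
\dim P_k(G)=\rank\mu_k^*=\rank\mu_k=\dim\mathcal D(G)_k .
\]
The last equality holds because the image of $\mu_k$ is $\mathcal D(G)_k$, as noted before the theorem.

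\textbf{Main obstacle.} The only delicate point is the bookkeeping in Step 1: showing that no multinomial factors such as $k!$ or $\alpha!$ appear when the product of $\theta$'s is expanded. The normalization of $\iota_k$ via the dual basis of $e^\alpha$ has to match this. It works because only squarefree $\alpha$ contribute, and for those the ordered expansion gives each matching-compatible assignment exactly once.
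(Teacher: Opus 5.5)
Your proposal is correct and follows essentially the same route as the paper. Like the paper, you compare the $u_M$-coefficient of $\prod_v\theta_v^{\alpha_v}$ directly with the $x^\alpha$-coefficient of $p_M$, and then use the same rank argument $\dim P_k(G)=\rank\mu_k^*=\rank\mu_k=\dim\mathcal D(G)_k$. The differences are only in packaging: you organize the expansion by endpoint-choice functions $\phi\colon M\to V$ and rule out non-squarefree $\alpha$ by the disjointness of the edges of $M$, whereas the paper splits into the cases $\alpha_v\geq 2$ (using $\theta_v^2=0$) and $\alpha\in\{0,1\}^V$.
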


\begin{proof}
Fix a $k$-matching $M$.  We begin by proving \eqref{eq:iotamuu}.
Observe that the map $\iota_k$ sends each form $f \in S^k(\kk^V)^*$
to the polynomial $\sum_{|\alpha| = k} f(e^\alpha) x^\alpha$.
Hence, in order to prove \eqref{eq:iotamuu}, it suffices to show
that
\begin{equation}\label{eq:iotamuu-proof}
 \bigl(\mu_k^*(u_M^*)\bigr)(e^\alpha)=[x^\alpha]p_M
 \qquad\text{for every }\alpha \in \mathbb{N}^V
 \text{ satisfying }|\alpha| = k.
\end{equation}

So let us fix a family
$\alpha=(\alpha_v)_{v\in V}\in\mathbb{N}^V$ with
$|\alpha|=k$. Then,
\[
 \bigl(\mu_k^*(u_M^*)\bigr)(e^\alpha)
 = u_M^* \left( \mu_k(e^\alpha)\right)
 = u_M^* \left( \prod_{v\in V}\theta_v^{\alpha_v} \right)
 = [u_M]\prod_{v\in V}\theta_v^{\alpha_v}
\]
(where $[u_M] r$ denotes the coefficient of $u_M$ in some
$r \in \mathcal M(G)$).
In order to prove \eqref{eq:iotamuu-proof}, we must show
that this scalar is exactly the coefficient $[x^\alpha]p_M$.

First suppose that $\alpha_v\geq2$ for some vertex $v$.  Then
$\prod_{v\in V}\theta_v^{\alpha_v}=0$, since $\theta_v^2=0$ by
\eqref{eq:elementary-relations}.  Hence the left-hand side of
\eqref{eq:iotamuu-proof} is zero.  On the other hand, $p_M$ is squarefree
in the variables $x_v$, since no vertex belongs to two edges of $M$.
Thus $[x^\alpha]p_M=0$ as well.
So the equality \eqref{eq:iotamuu-proof} holds in this case because
both of its sides vanish.

It remains to consider the case when $\alpha_v\leq1$ for each vertex $v$.
Let
\[
 S:=\{v\in V:\alpha_v=1\},
\]
so that $|S|=|\alpha|=k$ and
\begin{align}
 \label{e-v-prod-1}
 \prod_{v\in V}\theta_v^{\alpha_v}
 =\prod_{v\in S}\theta_v
 =\prod_{v\in S}\left(\sum_{e\in E}\varepsilon_{v,e}u_e\right).
\end{align}
The coefficient of $u_M$ in this product is zero unless every edge of $M$
has an endpoint in $S$.  This condition is equivalent to saying that
every edge of $M$ has \textbf{exactly one} endpoint in $S$
(because the $k$ edges of $M$ are pairwise disjoint,
and thus choosing one endpoint of each of them already gives $k=|S|$
distinct vertices, leaving no space for second endpoints).

Assume this condition holds.  Then there is a unique way to obtain $u_M$
when expanding the product \eqref{e-v-prod-1}:
for each $e\in M$, choose $u_e$ from the factor
indexed by the unique vertex $v\in e\cap S$.  The coefficient of $u_M$
in \eqref{e-v-prod-1} is therefore
\begin{align}
 \label{e-v-prod-2}
 \prod_{\substack{e\in M;\\ v\text{ is the unique element of }e\cap S}}
 \varepsilon_{v,e}.
\end{align}
But the coefficient of $x_v$ in $p_e$ is exactly $\varepsilon_{v,e}$.
Consequently the same product \eqref{e-v-prod-2} is the coefficient of
$x^\alpha=\prod_{v\in S}x_v$ in
\[
 p_M=\prod_{e\in M}p_e.
\]
Thus \eqref{eq:iotamuu-proof} holds in the $\alpha_v \leq 1$ case,
at least under the condition that every edge
of $M$ has an endpoint in $S$. But \eqref{eq:iotamuu-proof}
also holds if this condition is violated, since both sides vanish
in this case. Hence \eqref{eq:iotamuu-proof} always holds.

Forget that we fixed $\alpha$. We thus have proved
\eqref{eq:iotamuu-proof}, which in turn completes
the proof of \eqref{eq:iotamuu}.

Now forget that we fixed $M$.
The forms $u_M^*$ form a basis of $\mathcal M(G)_k^*$,
whereas the $p_M$ span $P_k(G)$. Hence, the equality
\eqref{eq:iotamuu} entails
\[
 \iota_k(\im\mu_k^*) = P_k(G),
\]
so that $P_k(G) = \iota_k(\im\mu_k^*) \cong \im \mu_k^*$
(since $\iota_k$ is an isomorphism).
Therefore
\[
 \dim P_k(G)
 = \dim \im \mu_k^*
 =\rank\mu_k^*
 =\rank\mu_k
 =\dim\im\mu_k
 =\dim\mathcal D(G)_k,
\]
as claimed.
\end{proof}

\begin{remark}\label{rmk:duality}
The proof gives a perfect pairing
\[
 \mathcal D(G)_k\times P_k(G)\longrightarrow\kk.
\]
Indeed, for any linear map $T:X\to Y$ between two finite-dimensional
$\kk$-vector spaces $X$ and $Y$, the natural pairing
\[
\im T \times \im T^*
\to \kk,
\qquad
(Tx, T^*\varphi) \mapsto \varphi(Tx) = (T^*\varphi)(x)
\]
between
$\im T$ and $\im T^*$ is perfect, since
\[
 \im T\cong X/\ker T,
 \qquad
 \im T^*=(\ker T)^\perp\cong (X/\ker T)^*.
\]
In the present case (where $X = S^k(\kk^V)$ and $Y = \mathcal M(G)_k$
and $T = \mu_k$), we thus obtain a pairing
$\langle \cdot, \cdot \rangle : \mathcal D(G)_k \times P_k(G) \to \kk$
given explicitly by
\begin{equation}\label{eq:coefficient-pairing}
 \langle d,p_M\rangle=[u_M]d
 \qquad \text{for all $d\in\mathcal D(G)_k$ and $M\in\Match_k(G)$}
\end{equation}
(by \eqref{eq:iotamuu}).
Thus, the spaces $\mathcal D(G)_k$ and $P_k(G)$ are in perfect duality, and in particular
$\dim\mathcal D(G)_k=\dim P_k(G)$, as claimed.
\end{remark}

The equality \eqref{eq:dimD=dimP} 
says that $\Hilb(\mathcal D(G),t)=\sum_k\dim P_k(G)t^k$.
Although $m_k(G)$ counts the chosen generators of $P_k(G)$, their linear
relations contain additional information about $G$.

\begin{example}[Small Dunkl matching algebras]\label{ex:small-dunkl}
Orient every displayed edge from the smaller endpoint to the larger one
(i.e., orient it as $(i,j)$ where $i<j$).

For the path graph $P_4$, we have
\[
 \theta_1=u_{12},\qquad
 \theta_2=-u_{12}+u_{23},\qquad
 \theta_3=-u_{23}+u_{34},\qquad
 \theta_4=-u_{34}.
\]
Hence
\[
 u_{12}=\theta_1,\qquad
 u_{23}=\theta_1+\theta_2,\qquad
 u_{34}=-\theta_4,
\]
so every edge generator $u_e$ already lies in $\mathcal D(P_4)$.  Therefore
\[
 \mathcal D(P_4)=\mathcal M(P_4),
 \qquad
 \Hilb(\mathcal D(P_4),t)=1+3t+t^2.
\]
(The same argument shows $\mathcal D(G)=\mathcal M(G)$ for every forest $G$,
since the incidence matrix of a forest has rank $|E|$.)

For the triangle $K_3$, we have
\[
 \theta_1=u_{12}+u_{13},\qquad
 \theta_2=-u_{12}+u_{23},\qquad
 \theta_3=-u_{13}-u_{23}.
\]
Thus $\theta_1+\theta_2+\theta_3=0$, and all products of two $\theta_i$ vanish
because every pair of edges of $K_3$ is incident.  Consequently
\[
 \mathcal D(K_3)=\kk\oplus\Span\{\theta_1,\theta_2\},
 \qquad
 \Hilb(\mathcal D(K_3),t)=1+2t,
\]
so here $\mathcal D(K_3)$ is a proper subalgebra of $\mathcal M(K_3)$.

The first nontrivial degree-$2$ example is $K_4$.  Put
\[
 m_1=u_{12}u_{34},\qquad
 m_2=u_{13}u_{24},\qquad
 m_3=u_{14}u_{23}.
\]
These are the three degree-$2$ basis elements of $\mathcal M(K_4)$.  Direct
multiplication gives
\[
 \theta_1\theta_2=m_2+m_3,\qquad
 \theta_1\theta_3=m_1-m_3,\qquad
 \theta_2\theta_3=-m_1-m_2.
\]
The three displayed vectors have one linear relation and span a $2$-dimensional
space; since $\theta_4=-(\theta_1+\theta_2+\theta_3)$ and $\theta_i^2=0$, they
span all of $\mathcal D(K_4)_2$.  Hence
\[
 \Hilb(\mathcal D(K_4),t)=1+3t+2t^2.
\]
This is the smallest example in which the matching algebra has more degree-$2$
basis elements than the Dunkl matching algebra has degree-$2$ dimensions.
\end{example}

\begin{proposition}\label{prop:basic-properties}
The following properties hold.
\begin{enumerate}
\item If $G=G_1\sqcup G_2$, then
$\mathcal D(G)\cong\mathcal D(G_1)\otimes\mathcal D(G_2)$.
\item We have $\mathcal D(G)_k=0$ for all $k>\nu(G)$, where $\nu(G)$ is the matching
number of $G$ (that is, the size of a largest matching of $G$).
\item We have $\dim\mathcal D(G)_1=|V|-c(G)$, where $c(G)$ is the number of connected components of $G$.
\item If $H$ is a spanning subgraph of $G$, then
\[
P_k(H)\subseteq P_k(G),\qquad
\dim\mathcal D(H)_k\leq\dim\mathcal D(G)_k.
\]
\end{enumerate}
\end{proposition}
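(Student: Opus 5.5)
Each of the four parts is handled separately; all are short consequences of the definitions together with \cref{thm:product-realization}.

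For (1), let $G=G_1\sqcup G_2$. There are no edges between $G_1$ and $G_2$, so no relation $u_eu_f$ mixes edge variables of the two components. Hence
\[
 \mathcal M(G)\cong\mathcal M(G_1)\otimes\mathcal M(G_2)
\]
as graded algebras; alternatively, this follows from \cref{prop:matching-basis}, since matchings of $G$ are exactly disjoint unions $M_1\sqcup M_2$ of matchings of $G_1$ and $G_2$, and $u_{M_1\sqcup M_2}\mapsto u_{M_1}\otimes u_{M_2}$. Under this isomorphism, $\theta_v$ for $v\in V(G_i)$ goes to the corresponding signed incidence sum in the $i$-th tensor factor. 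So $\mathcal D(G)$ is the subalgebra generated by $\mathcal D(G_1)\otimes1$ and $1\otimes\mathcal D(G_2)$, which is $\mathcal D(G_1)\otimes\mathcal D(G_2)$: it is the image of the tensor product, and the natural map is injective because it is the restriction of the isomorphism above, using that tensor products of subspaces over a field embed.

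For (2), $\mathcal D(G)_k\subseteq\mathcal M(G)_k$. The latter is spanned by the $u_M$ with $|M|=k$, and it is zero when $k>\nu(G)$.

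For (3), $\mathcal D(G)_1$ is the span of the $\theta_v$ inside $\mathcal M(G)_1=\kk^E$. These are exactly the rows of the signed incidence matrix of $G$, whose rank is $|V|-c(G)$ over any field; this is the standard fact, checked by noting that the left kernel consists of vectors that are constant on each connected component. Alternatively, use \cref{thm:product-realization}: $P_1(G)$ is spanned by the $x_i-x_j$ over edges $ij$, and this span is $\{\sum a_vx_v : \sum_{v\in C}a_v=0 \text{ for each component } C\}$, which has dimension $|V|-c(G)$. Here one uses that spanning-tree edges generate all differences within a component.

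For (4), a spanning subgraph $H$ of $G$ has the same vertex set, so both $P_k(H)$ and $P_k(G)$ live in the same ring $\kk[x_v\mid v\in V]$. Every $k$-matching of $H$ is a $k$-matching of $G$, and $p_M$ is the same polynomial up to sign (for a consistent orientation, exactly the same). Hence $P_k(H)\subseteq P_k(G)$. Applying \eqref{eq:dimD=dimP} to both graphs then gives the dimension inequality.

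The only step that needs care is the injectivity claim in (1); it is handled by restricting the explicit tensor decomposition of the monomial bases.
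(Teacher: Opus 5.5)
Your proof is correct and matches the paper's argument in substance. The only differences are minor. For (2) and (3) you work directly inside $\mathcal M(G)$, using the matching basis and the rows of the incidence matrix, whereas the paper passes through the duality $\dim\mathcal D(G)_k=\dim P_k(G)$. For the inequality in (4) you use that duality, whereas the paper points to the surjection $\mathcal D(G)\to\mathcal D(H)$ that kills $u_e$ for $e\notin E(H)$. Your explicit treatment of the tensor decomposition in (1) spells out what the paper summarizes as ``disjoint sets of variables and incidence sums''.
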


\begin{proof}
The first assertion follows from the disjoint sets of variables and incidence
sums.  The second follows from \eqref{eq:dimD=dimP}.  For the third,
observe that $P_1(G)$ is the span of the vectors $x_i-x_j$, which are the
columns of the incidence matrix of $G$; thus, its dimension is the rank of
the incidence matrix, which is known to equal $|V|-c(G)$.
The fourth assertion is immediate (note that there is a graded surjective
$\kk$-algebra homomorphism $\mathcal D(G) \to \mathcal D(H)$ given by sending
all $u_e$ with $e \notin E(H)$ to $0$).
\end{proof}

\section{Complete graphs and two-row Specht modules}

We now specialize to the complete graph $K_n$ on the vertex set $[n]$.
The symmetric group $S_n$ acts on $\kk[x_1,\ldots,x_n]$ by permuting the
variables:
\[
 \sigma(x_i)=x_{\sigma(i)}
 \qquad(\sigma\in S_n).
\]
Since $S_n$ also permutes the $k$-matchings of $K_n$, this action preserves
$P_k(K_n)$: more precisely, $\sigma(p_M)=\pm p_{\sigma(M)}$, where the sign
only reflects our choices of orientations for the edges.  Thus $P_k(K_n)$ is
naturally an $S_n$-representation.

To identify this representation, let us recall the polynomial realization of
the Specht modules.  Let $\lambda\vdash n$, and let $T$ be a Young tableau of
shape $\lambda$, filled bijectively with $1,2,\ldots,n$.  For a column of $T$
whose entries, from top to bottom, are $i_1,\ldots,i_r$, form the Vandermonde
factor
\[
 \prod_{1\leq a<b\leq r}(x_{i_a}-x_{i_b}).
\]
The \emph{Specht polynomial} of $T$ is the product of these factors over all
columns of $T$:
\[
 \Delta_T
 :=
 \prod_{\substack{\text{column }C\text{ of }T;\\
                   i_1,\ldots,i_r\text{ are the entries of }C}}
 \ \prod_{1\leq a<b\leq r}(x_{i_a}-x_{i_b}).
\]
The span of the Specht polynomials $\Delta_T$ over all tableaux $T$ of shape
$\lambda$ is the usual Specht module $S^\lambda$.
See, e.g., \cite[Satz 1]{Specht} or \cite[Theorem 5.6.1]{sga}.

Now let $\lambda=(n-k,k)$, where
$0\leq k\leq\lfloor n/2\rfloor$.  Such a diagram has exactly $k$ columns of
height $2$, while all remaining columns have height $1$.  Hence every tableau
$T$ of shape $(n-k,k)$ determines a $k$-matching $M(T)$ of $K_n$, namely the
pairs of entries occurring in its two-box columns, and we have
\[
 \Delta_T=\pm p_{M(T)}.
\]
Conversely, every $k$-matching of $K_n$ arises in this way from some tableau of
shape $(n-k,k)$.  Thus the span $P_k(K_n)$ of the $p_M$'s is precisely the
Specht module of shape $(n-k,k)$.

\begin{theorem}\label{thm:complete-specht}
For every $k$ satisfying $0\leq k\leq\lfloor n/2\rfloor$, we have,
as $S_n$-modules,
\[
 P_k(K_n)\cong S^{(n-k,k)}.
\]
Consequently,
\[
 \Hilb(\mathcal D(K_n),t)
 =\sum_{0\leq k\leq n/2}
 \left(\binom nk-\binom n{k-1}\right)t^k.
\]
\end{theorem}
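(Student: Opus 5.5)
The first assertion is essentially the observation made just before the statement: for a tableau $T$ of shape $(n-k,k)$, the Specht polynomial $\Delta_T$ equals $\pm p_{M(T)}$, and every $k$-matching of $K_n$ arises as $M(T)$ for some $T$. To see the latter, place the $k$ pairs of $M$ into the $k$ two-box columns and the remaining $n-2k$ entries into the first row; this needs $k\le n-k$, i.e.\ $k\le\lfloor n/2\rfloor$. Hence the spanning sets $\{\pm p_M\}$ and $\{\Delta_T\}$ coincide up to sign, so $P_k(K_n)$ equals the span of the Specht polynomials of shape $(n-k,k)$. By the cited polynomial realization (\cite[Satz 1]{Specht}, \cite[Theorem 5.6.1]{sga}), this span is $S^{(n-k,k)}$. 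The $S_n$-equivariance is automatic, because both sides are the same subspace of $\kk[x_1,\ldots,x_n]$ with the same permutation action. So the isomorphism is in fact an equality of subrepresentations.

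For the Hilbert series, \cref{thm:product-realization} gives $\dim\mathcal D(K_n)_k=\dim P_k(K_n)$ for each $k$. For $k>\lfloor n/2\rfloor$ there are no $k$-matchings, so these components vanish; this is also \cref{prop:basic-properties}(2). For $0\le k\le\lfloor n/2\rfloor$ we need $\dim S^{(n-k,k)}$, which over any field equals the number $f^{(n-k,k)}$ of standard Young tableaux of shape $(n-k,k)$, since the standard polytabloids, or equivalently the standard Specht polynomials, form a basis in every characteristic.

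It then remains to show $f^{(n-k,k)}=\binom nk-\binom n{k-1}$. I would do this by the reflection (ballot) argument. A standard tableau of shape $(n-k,k)$ is the same as a word in $\{1,2\}^n$ with $k$ twos in which every prefix contains at least as many ones as twos (letter $i$ records the row of entry $i$). There are $\binom nk$ words with $k$ twos in total. The bad words are those having a prefix with one more $2$ than $1$; reflecting the part of such a word after the first such prefix turns it into an arbitrary word with $k-1$ twos, and this reflection is a bijection. So there are exactly $\binom n{k-1}$ bad words. Alternatively, the hook length formula gives the same count directly.

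The only step needing care is the dimension claim in positive characteristic: one must invoke the characteristic-free basis theorem for Specht modules, and not irreducibility or character theory. The cited references cover this, so I do not expect a genuine obstacle. Everything else is bookkeeping of signs coming from the edge orientations, and these do not affect spans.
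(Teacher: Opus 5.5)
Your proposal is correct and follows the paper's proof: you identify $P_k(K_n)$ with the span of the Specht polynomials via $\Delta_T=\pm p_{M(T)}$, then combine $\dim S^{(n-k,k)}=f^{(n-k,k)}$ with \cref{thm:product-realization}. The only difference is that you count $f^{(n-k,k)}$ by the reflection principle, where the paper quotes the hook length formula. One small slip in that count: reflecting the suffix after the first bad prefix gives a word with $k-1$ ones (that is, $n-k+1$ twos), not $k-1$ twos; you would have to reflect the prefix to get $k-1$ twos. Either way the number of bad words is $\binom n{k-1}$, so the conclusion stands.
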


\begin{proof}
The $S_n$-module identification was established above.
For the Hilbert series, we need to compute $\dim S^{(n-k,k)}$.
Recall that the dimension of any Specht module $S^\lambda$
equals the number $f^\lambda$
of all standard Young tableaux of shape $\lambda$, and can be
computed by the hook-length formula.
Thus,
\[
 \dim S^{(n-k,k)}=
 f^{(n-k,k)} =
 \binom nk-\binom n{k-1}
 =\frac{n-2k+1}{n-k+1}\binom nk.
\]
Together with \eqref{eq:dimD=dimP}, this yields the displayed
Hilbert series.
\end{proof}

\begin{remark}[Positive characteristic]\label{rmk:complete-poschar}
The identification $P_k(K_n)\cong S^{(n-k,k)}$ and the hook length
formula for the dimension of the Specht module are valid
over any field.  However, if $\kk$ has positive characteristic,
then the Specht module can fail to be irreducible.
\end{remark}

The coefficients of $\Hilb(\mathcal D(K_n),t)$
form the Catalan triangle and count standard Young tableaux
of the fixed shape $(n-k,k)$.

\begin{remark}[An exterior analogue]\label{rem:exterior-analogue}
Assume in this remark that $\kk$ has characteristic zero.
Bergeron, Chan, Soltani, and Zabrocki~\cite{BCSZ} study a strikingly
parallel construction in the exterior algebra
\[
 R_n=\bigwedge(\xi_1,\ldots,\xi_n).
\]
For a matching\footnote{They refer to matchings as ``pairings''.}
$M=\{(i_1,j_1),\ldots,(i_k,j_k)\}$ of $K_n$, ordered so that
$j_1<\cdots<j_k$ and $i_r < j_r$ for every $r$, they set
\[
 \Delta_M :=
 (\xi_{j_1}-\xi_{i_1})\cdots(\xi_{j_k}-\xi_{i_k}) \in R_n.
\]
They define a specific noncrossing matching $C(\alpha)$
for each ballot sequence $\alpha$.
Their Corollary~4.5 shows that the elements $\Delta_{C(\alpha)}$
indexed by all ballot sequences $\alpha$ (of length $n$ with
$k$ entries equal to $1$) form a basis of the
degree-$k$ exterior quasisymmetric harmonic space $EQH_{n,k}$.
Thus,
\[
 \dim EQH_{n,k}=f^{(n-k,k)}=\dim P_k(K_n).
\]
There is in fact a very explicit vector-space isomorphism between these two
spaces.  The same ballot sequences give noncrossing matchings $C(\alpha)$,
and the commutative polynomials $p_{C(\alpha)}$ are linearly independent:
with respect to the lexicographic order induced by
$x_1>\cdots>x_n$, the lexicographically smallest monomial of
$p_{C(\alpha)}$ is obtained by choosing the right endpoint of every pair
(i.e., the positions of all closing parentheses), and these monomials
are distinct for different $\alpha$.  Since there are
$f^{(n-k,k)}$ such ballot sequences, these polynomials form a basis of
$P_k(K_n)$.  Hence there is a unique isomorphism
\begin{equation}\label{eq:comm-exterior-web-isomorphism}
 \Psi_k:P_k(K_n)\longrightarrow EQH_{n,k},
 \qquad
 p_{C(\alpha)}\longmapsto \Delta_{C(\alpha)}.
\end{equation}
So, at the level of graded vector spaces with their preferred noncrossing
bases, the commutative and exterior pictures are very close.

However, the parallels end here. The vector space $EQH_{n,k}$
is the span of the $\Delta_M$ corresponding to all noncrossing
matchings $M$ (by \cite[Proposition 2.9 and Corollary 4.5]{BCSZ}),
but usually not the span of $\Delta_M$ for \textbf{all} the
matchings $M$.
In particular, it is not an $S_n$-representation, which should
not come as a surprise because the $S_n$-action does not preserve
noncrossingness (unless $k\leq 1$).
Thus, there is no hope for an isomorphism of $S_n$-representations.
\footnote{Another false trail: There is an obvious linear
isomorphism from the squarefree degree-$k$ part of
$\kk[x_1,\ldots,x_n]$ to $\bigwedge^k\kk^n$, sending
$x_{a_1}\cdots x_{a_k}$ (with $a_1<\cdots<a_k$) to
$\xi_{a_1}\cdots\xi_{a_k}$.  But this map does not send $p_M$ to
$\Delta_M$: when the chosen endpoints are reordered,
anticommutativity contributes signs which depend on the chosen term of the
expansion.
Thus, for example, $(x_1 - x_4) (x_2 - x_3)
= x_1 x_2 - x_1 x_3 - x_2 x_4 + x_3 x_4$ is sent to
$\xi_1 \xi_2 - \xi_1 \xi_3 - \xi_2 \xi_4 + \xi_3 \xi_4
\neq (\xi_1 - \xi_4) (\xi_2 - \xi_3)$.}

To drive this point home, let us compare the span of the $\Delta_M$
for all $k$-matchings $M$ of $K_n$ with $EQH_{n,k}$.  Let
\[
 W=\Span\{\xi_i-\xi_j:1\leq i,j\leq n\},
\]
the standard $(n-1)$-dimensional representation of $S_n$.
For a $k$-matching $M$, the element $\Delta_M$ belongs to
the subspace $\bigwedge^k W\subseteq\bigwedge^k \kk^n$
(since the $k$ factors $\xi_{j_r} - \xi_{i_r}$ of $\Delta_M$ all lie in $W$),
and is nonzero (since the $k$ factors of $\Delta_M$
are linearly independent).  The $S_n$-orbit of any one such
$\Delta_M$ consists, up to sign, of all $\Delta_N$s corresponding
to all $k$-matchings $N$.  Since the $S_n$-representation
\[
 \bigwedge^k W\cong S^{(n-k,1^k)}
\]
is irreducible in characteristic zero, it follows that
\begin{equation}\label{eq:exterior-matching-span}
 \Span\{\Delta_M:M\in\Match_k(K_n)\}
 =\bigwedge^k W\cong S^{(n-k,1^k)}.
\end{equation}
Thus the complete commutative matching family
$\{p_M:M\in\Match_k(K_n)\}$ spans the two-row module
$S^{(n-k,k)}$, whereas the complete exterior matching family
$\{\Delta_M:M\in\Match_k(K_n)\}$ spans the hook
module $S^{(n-k,1^k)}$.  Their dimensions are respectively
\[
 \binom nk-\binom n{k-1}
 \qquad\text{and}\qquad
 \binom{n-1}{k},
\]
which agree only for $k=0,1$.

Already for $n=4$ and $k=2$, the difference between the complete
commutative and exterior matching families is visible in the Garnir relation
\[
 p_{12}p_{34}-p_{13}p_{24}+p_{14}p_{23}=0
\]
in commuting variables.  The three corresponding exterior products are
linearly independent (they span $\bigwedge^2W$, which has dimension $3$).
\end{remark}

\begin{theorem}\label{thm:presentation}
Assume that the field $\kk$ has characteristic $0$.
Then, there is an isomorphism of graded algebras
\[
 \mathcal D(K_n)\cong
 \kk[z_1,\ldots,z_n]/
 (z_1+\cdots+z_n,z_1^2,\ldots,z_n^2),
 \qquad z_i\longmapsto\theta_i.
\]
\end{theorem}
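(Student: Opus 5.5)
The map $z_i\mapsto\theta_i$ gives a surjective graded algebra homomorphism $\phi:\kk[z_1,\ldots,z_n]\to\mathcal D(K_n)$, because $\mathcal D(K_n)$ is generated by the $\theta_i$. By \eqref{eq:elementary-relations}, with $K_n$ connected, we have $\theta_i^2=0$ and $\sum_i\theta_i=0$. So $\phi$ factors through a surjection
\[
 \bar\phi: A_n:=\kk[z_1,\ldots,z_n]/(z_1+\cdots+z_n,z_1^2,\ldots,z_n^2)\longrightarrow\mathcal D(K_n).
\]
It therefore suffices to prove $\dim (A_n)_k\leq \dim\mathcal D(K_n)_k=\binom nk-\binom n{k-1}$ for every $k$, using \cref{thm:complete-specht}. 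Surjectivity then forces $\bar\phi$ to be an isomorphism in each degree. The plan is to prove this upper bound directly in $A_n$, and the argument will not actually need characteristic zero beyond what is used below.

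Let $B=\kk[z_1,\ldots,z_n]/(z_1^2,\ldots,z_n^2)$. This ring has basis the squarefree monomials $z_S$, so $\dim B_k=\binom nk$. Put $\ell=z_1+\cdots+z_n$, so that $A_n=B/\ell B$. In each degree we get an exact sequence
\[
 B_{k-1}\xrightarrow{\ \cdot\ell\ }B_k\to (A_n)_k\to0,
\]
and hence $\dim(A_n)_k=\binom nk-\rank(\cdot\ell:B_{k-1}\to B_k)$. In the basis of squarefree monomials, multiplication by $\ell$ is exactly the Boolean up-operator, $z_S\mapsto\sum_{j\notin S}z_{S\cup\{j\}}$. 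Its matrix is the inclusion matrix between $(k-1)$-subsets and $k$-subsets of $[n]$.

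The key step is the classical fact that this inclusion matrix has full rank $\min\bigl(\binom n{k-1},\binom nk\bigr)$ in characteristic zero. This is the weak Lefschetz property of $B$, and it is where characteristic zero is needed. I would cite Gottlieb's theorem, or the standard $\mathfrak{sl}_2$ (up/down operator) argument, to justify it.

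For $k\leq n/2$ the rank is $\binom n{k-1}$, which gives $\dim(A_n)_k=\binom nk-\binom n{k-1}$, the required bound. For $k>n/2$ the map is surjective, so $(A_n)_k=0$, which matches $\mathcal D(K_n)_k=0$ from \cref{prop:basic-properties}(2). This completes the argument.

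The main obstacle is this injectivity/surjectivity of $\cdot\ell$. If I want to avoid citing it, I would write down the down-operator $D$ and verify the commutator relation $[D,U]=(n-2k)\,\mathrm{id}$ on degree $k$. A standard induction on $k$ then shows that $DU$ is positive definite on $B_{k-1}$ for $k\leq n/2$, because its eigenvalues are positive integers, which are nonzero in characteristic zero. Positive definiteness gives injectivity of $U$, and the case $k>n/2$ follows from the symmetric statement.
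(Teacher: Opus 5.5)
Your proposal is correct and takes essentially the paper's route: a surjection from the quotient onto $\mathcal D(K_n)$, the dimension count from \cref{thm:complete-specht}, and maximal rank of the Boolean up-operator via the commutator $DU-UD=(n-2r)\operatorname{id}$. The only cosmetic difference is how injectivity is obtained. The paper reads it off directly from the identity $\|Uf\|^2=\|Df\|^2+(n-2r)\|f\|^2$ over $\mathbb R$, then passes to any characteristic-zero field because the matrix has $0/1$ entries. You instead use an eigenvalue induction or cite Gottlieb's theorem.
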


As an abstract commutative algebra, the quotient in
\cref{thm:presentation} is the case $q=1$ of the family
\[
 \kk[z_1,\ldots,z_n]/
 (z_1^2,\ldots,z_n^2,(z_1+\cdots+z_n)^q)
\]
studied in detail by Jonsson Kling et al.~\cite{JKLMOS}; among other things,
they determine Gr\"obner bases and Hilbert series for the general family and
give another proof of the strong Lefschetz property of the squarefree
algebra.  We nevertheless include the short Boolean-lattice argument needed
for the present case.

\begin{proof}
The relations in the ideal $(z_1+\cdots+z_n,z_1^2,\ldots,z_n^2)$
hold for the elements $\theta_1,\ldots,\theta_n$ by
\eqref{eq:elementary-relations}, so there is a surjective
$\kk$-algebra homomorphism
\[
 \Phi : \kk[z_1,\ldots,z_n]/
 (z_1+\cdots+z_n,z_1^2,\ldots,z_n^2)
 \to \mathcal D(K_n)
\]
that sends each $z_i$ to $\theta_i$.
We must show that this surjection $\Phi$ is an isomorphism.
It clearly suffices to show that its domain and its codomain
have equal Hilbert functions.  Put
\[
 A:=\kk[z_1,\ldots,z_n]/(z_1^2,\ldots,z_n^2)
 \qquad\text{and}\qquad
 L:=z_1+\cdots+z_n \in A.
\]
Note that the $\kk$-vector space $A$ has dimension $2^n$
and a basis consisting of the (residue classes of the)
squarefree monomials
\[
z_S:=\prod_{i\in S}z_i
\qquad \text{ for all } S \subseteq [n].
\]
We must compute the Hilbert function of $A/(L)$.
This boils down to determining $\dim(A/(L))_k$ for all $k$.
This is rather well-known; indeed, the maximal-rank statement below is
the classical Boolean up-operator
phenomenon underlying the strong Lefschetz property of this squarefree
complete intersection; see also Stanley~\cite{StanleyLefschetz} and
\cite[\S3.1]{JKLMOS}.  We give the elementary argument here for the
sake of completeness.  We claim that
multiplication by $L$ -- that is, the $\kk$-linear map
\[
 U_r:A_r\longrightarrow A_{r+1},\qquad f\longmapsto Lf
\]
-- has maximal rank in every degree (i.e., is injective for $r < n/2$
and surjective for $r \geq n/2$).  It is enough to prove this over
$\mathbb R$.  Indeed, in the squarefree monomial bases, the matrix of $U_r$
has only $0$'s and $1$'s, so a maximal minor which is nonzero over
$\mathbb R$ is a nonzero integer and therefore remains nonzero over every
field of characteristic zero.

The maps $U_r : A_r \to A_{r+1}$ for all $r$ are the graded pieces of a
$\kk$-linear map $U : A \to A$, which we call the \emph{up-operator}
since it raises degrees by $1$.
Define the \emph{down-operator} $D : A \to A$ as the $\kk$-linear map
given by
\[
 D(z_S):=\sum_{i\in S}z_{S\setminus\{i\}}
 \qquad \text{ for each } S \subseteq [n].
\]
This operator decreases degrees by $1$; that is, $D (A_r) \subseteq
A_{r-1}$ for each $r$.

Give the $\mathbb{R}$-vector space $A$ the inner product for which the squarefree
monomials $z_S$ are orthonormal.
Then $D$ is the adjoint of $U$.
A direct calculation gives, on $A_r$, the equality
\begin{equation}\label{eq:boolean-commutator}
 DU-UD=(n-2r)\,\operatorname{id}_{A_r}.
\end{equation}
Indeed, for $|S|=r$, the mixed terms
$z_{S\setminus\{i\}\cup\{j\}}$ occur once in both $DU(z_S)$ and $UD(z_S)$,
while $z_S$ occurs respectively $n-r$ and $r$ times.

If $r<n/2$ and $0\ne f\in A_r$, then
\begin{align*}
\|Uf\|^2
&= \langle Uf, Uf\rangle
= \langle DUf,f\rangle \qquad \left(\text{since $D$ and $U$ are adjoint}\right) \\
&= \langle UDf + (n-2r)f,f\rangle
 \qquad \left(\text{since } DU=UD+(n-2r)\,\operatorname{id}_{A_r} \text{ by } \eqref{eq:boolean-commutator}\right) \\
&= \langle UDf,f\rangle+(n-2r)\|f\|^2 \\
&=\|Df\|^2+(n-2r)\|f\|^2
 \qquad\left(\text{again since $D$ and $U$ are adjoint}\right)
\\
&>0 \qquad \left(\text{by positive definiteness and since $n-2r>0$}\right),
\end{align*}
whence $Uf \neq 0$.
Thus $U_r$ is injective on the lower half of $A$ (that is, for $r < n/2$).
On the other hand, the adjoint of $U_r$ with respect to our inner
product is $D_{r+1}:A_{r+1}\to A_r$ (that is, the restriction of $D$
to $A_{r+1}$); thus, the injectivity of $U_r$ for $r < n/2$
entails the surjectivity of its adjoint $D_{r+1}$ for $r < n/2$.
Under the complement identification
$z_S\leftrightarrow z_{[n]\setminus S}$, this down-operator $D_{r+1}$ becomes
$U_{n-r-1}$ (to make this more precise: we have
$U_{n-r-1} = X_{r} \circ D_{r+1} \circ X_{r+1}^{-1}$, where $X_k$
denotes the isomorphism $A_k \to A_{n-k}$ sending each
$z_S$ to $z_{[n] \setminus S}$).
Hence the surjectivity of $D_{r+1}$ we just proved entails that
$U_{n-r-1}$ is surjective for $r < n/2$.
Substituting $n-r-1$ for $r$ here, we conclude that $U_r$ is
surjective for all $r > n/2-1$, hence in particular for all $r \geq n/2$.

Thus we have shown that $U_r$ is injective whenever $r < n/2$ and
surjective whenever $r \geq n/2$. Hence, $U_r$ always has maximal
rank:
\[
\rank U_r = \min\left\{ \dim A_r, \dim A_{r+1} \right\}
= \min\left\{ \binom nr, \binom n{r+1} \right\}
\]
(since $\dim A_k=\binom nk$). This proves the claim.

For $k=0$, we plainly have $\dim(A/(L))_0=1$.  Now let $k\geq1$.
Since $\dim A_k=\binom nk$, the claim we just proved shows that
\[
 \rank U_{k-1}
 =\min\left\{\binom n{k-1},\binom nk\right\}.
\]
But the degree-$k$ component of $A/(L)$ is the
cokernel of $U_{k-1}:A_{k-1}\to A_k$.  Therefore
\begin{align*}
 \dim(A/(L))_k
 &= \binom nk - \rank U_{k-1}
 = \binom nk - \min\left\{\binom n{k-1}, \binom nk\right\}
 \\
 &=\max\left\{\binom nk-\binom n{k-1},0\right\}.
\end{align*}
This agrees with $\dim\mathcal D(K_n)_k$ by
\cref{thm:complete-specht}.  The surjection $\Phi$
is therefore an isomorphism in every degree (since a
surjective linear map between finite-dimensional
vector spaces of the same dimension must be an
isomorphism).
\end{proof}

\begin{remark}[The presentation in positive characteristic]
\label{rmk:presentation-poschar}
The characteristic-zero hypothesis in \cref{thm:presentation} is crucial.
Over an arbitrary field, the same assignment $z_i\mapsto\theta_i$ gives a
graded surjection
\[
 \Phi:\kk[z_1,\ldots,z_n]/
 (z_1+\cdots+z_n,z_1^2,\ldots,z_n^2)
 \longrightarrow\mathcal D(K_n).
\]
Suppose now that $\operatorname{char}\kk=p>0$.  In the squarefree monomial
bases, the map
\[
 U_r:A_r\longrightarrow A_{r+1}
\]
is the inclusion matrix between the $r$-subsets and the $(r+1)$-subsets of
$[n]$.  Wilson's diagonal form for inclusion matrices~\cite{WilsonIncidence}
shows, when $r<n/2$, that its nonzero diagonal entries over $\mathbb{Z}$ are
\[
 1,2,\ldots,r+1
\]
(with positive multiplicities).  Hence $U_r$ is injective over $\kk$ if and
only if $p>r+1$.  By complementing subsets, the corresponding statement in
the upper half is equivalent to surjectivity.  Consequently all the maps
$U_r$ have maximal rank if and only if
\[
 p>\left\lceil\frac n2\right\rceil.
\]
Since \cref{thm:complete-specht} shows that the Hilbert
series of $\mathcal D(K_n)$ itself is independent of the characteristic,
the surjection $\Phi$ above is an isomorphism exactly when
\[
 p>\left\lceil\frac n2\right\rceil;
\]
for $p\leq\lceil n/2\rceil$, its source has a larger Hilbert function
than $\mathcal D(K_n)$ in at least one degree.
\end{remark}

\section{Arbitrary graphs}

We begin with conventions.
For a finite set $W$, let $K_W$ denote the complete graph with vertex
set $W$.  Thus $K_{[n]}=K_n$.
The graphs $K_n$ and $K_W$ are isomorphic whenever $\left|W\right| = n$;
thus, they differ only in the labeling of the vertices. Thus, for the
rest of this section, we \textbf{assume that the vertex set $V$ of
our graph $G$ is $[n] = \left\{1,2,\ldots,n\right\}$}, but we expect the
reader to understand that all our results hold equally well, after relabeling,
for graphs on any other $n$-element vertex set.

The spaces $P_k(G)$ can be computed in finite time: enumerate the
$k$-matchings, expand their products in the monomial basis of
$\kk[x_v]_k$, and take their span (which is the column space of
a matrix).
In particular, $P_k(G) \subseteq P_k(K_n)$, since $G$ is a
subgraph of $K_n$.
We may wonder when equality holds:

\begin{definition}
Let $0\leq k\leq\lfloor n/2\rfloor$.  We say that a graph $G$ on $n$
vertices is \emph{$k$-saturated} if
\[
 P_k(G)=P_k(K_n).
\]
\end{definition}

\begin{problem}\label{prob:saturation}
Characterize the $k$-saturated graphs, either for a prescribed $k$ or
simultaneously for all $k\leq\nu(G)$.
\end{problem}

Note that this question can be restated as a question about the
Specht generators $\Delta_T$ of the Specht module $S^{(n-k,k)}$:
namely, we ask whether the subcollection selected by the
$k$-matchings of $G$ spans the whole Specht module.
In contrast, the Specht matroid introduced by
Wiltshire-Gordon, Woo, and Zajaczkowska~\cite{WGWZ} records the linear
dependences among the classical Specht generators of a given
Specht module $S^\lambda$.

\subsection{The elementary Garnir relations}

The elementary Garnir relations already give useful information.  For any
two vertices $i$ and $j$, write $p_{ij}=x_i-x_j$, so that $p_{ji}=-p_{ij}$.

\begin{lemma}[Elementary Garnir relations]\label{lem:garnir}
Let $N$ be a matching of $G$, and $a,b,c,d$ be four vertices.  Then
\begin{align}
 p_N(p_{ab}+p_{bc}+p_{ca})&=0,
 \label{eq:garnir-triangle}\\
 p_N(p_{ab}p_{cd}-p_{ac}p_{bd}+p_{ad}p_{bc})&=0.
 \label{eq:garnir-square}
\end{align}
\end{lemma}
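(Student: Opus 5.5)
It suffices to prove the two identities without the factor $p_N$: both equations are multiples of $p_N$, so it is enough to show that the bracketed polynomials vanish identically in $\kk[x_v \mid v\in V]$. The matching hypothesis on $N$ plays no role in the identity itself. It only records how the lemma will be used later, namely when $N\cup\{ab,cd\}$ etc.\ are matchings. Here we use $p_{ij}=x_i-x_j$ regardless of orientation, with $p_{ji}=-p_{ij}$. We need not assume $a,b,c,d$ distinct, since each identity is a polynomial identity in the symbols $x_a,x_b,x_c,x_d$ and remains true after specializing variables to coincide.

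For \eqref{eq:garnir-triangle}, direct cancellation gives
\[
p_{ab}+p_{bc}+p_{ca}=(x_a-x_b)+(x_b-x_c)+(x_c-x_a)=0 .
\]

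For \eqref{eq:garnir-square}, expand each product into monomials:
\begin{align*}
p_{ab}p_{cd}&=x_ax_c-x_ax_d-x_bx_c+x_bx_d,\\
p_{ac}p_{bd}&=x_ax_b-x_ax_d-x_cx_b+x_cx_d,\\
p_{ad}p_{bc}&=x_ax_b-x_ax_c-x_dx_b+x_dx_c.
\end{align*}
Form the alternating sum $p_{ab}p_{cd}-p_{ac}p_{bd}+p_{ad}p_{bc}$ and collect coefficients:
\begin{itemize}
\item $x_ax_b$: $-1+1=0$;
\item $x_ax_c$: $1-1=0$;
\item $x_ax_d$: $-1+1=0$;
\item $x_bx_c$: $-1+1=0$;
\item $x_bx_d$: $1-1=0$;
\item $x_cx_d$: $-1+1=0$.
\end{itemize}
So the sum is zero.

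Alternatively, the square identity follows from the triangle one. Substitute $x_c-x_d=(x_c-x_a)+(x_a-x_d)$ and similar decompositions, or view the expression as a polynomial in $x_a$ of degree at most $1$ that vanishes at $x_a=x_b$ and has zero $x_a$-coefficient. This is the standard Garnir relation for the column $\{a,b\}$ exchanged into $\{c,d\}$. The only thing that needs care is the sign pattern, which depends on the convention $p_{ji}=-p_{ij}$; the explicit coefficient table above checks it.
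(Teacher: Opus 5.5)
Your proof is correct and matches the paper's, which verifies both identities by expanding $p_{ij}=x_i-x_j$; your coefficient table for the square relation checks out term by term. The alternative argument (the $x_a$-coefficient vanishes and the expression is zero at $x_a=x_b$) is also valid, though it is not needed.
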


\begin{proof}
Both identities follow by expanding $p_{ij}=x_i-x_j$.
\end{proof}

When the vertices $a,b,c,d$ in \cref{lem:garnir} are distinct
and are not incident to any edge of $N$, the equalities
\eqref{eq:garnir-triangle} and \eqref{eq:garnir-square}
are linear relations among matching generators of
$P_{|N|+1}(K_n)$ and $P_{|N|+2}(K_n)$, respectively.
These are precisely the three-vertex and four-vertex straightening
relations for the two-row Specht polynomial model.

\subsection{$k$-saturation implies $k$-connectivity}

The first general obstruction is connectivity.
We say that $G$ is \emph{$k$-connected} if $G$ has more than $k$
vertices and the graph $G-S$ is connected for every set
$S\subset V$ with $|S|<k$.
(In particular, the graph $G$ is $2$-connected if it is connected,
has at least three vertices and has no cut-vertex.)

\begin{proposition}[Connectivity obstruction]\label{prop:k-connected}
Let $1\leq k\leq\lfloor n/2\rfloor$.  If $G$ is $k$-saturated, then $G$ is
$k$-connected.
\end{proposition}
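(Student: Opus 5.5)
The plan is to argue by contraposition. Suppose $1\le k\le\lfloor n/2\rfloor$ and $G$ is not $k$-connected. Since $n\ge 2k>k$, there is a set $S\subset V$ with $|S|<k$ such that $G-S$ is disconnected. So $V\setminus S=A\sqcup B$ with $A,B\neq\varnothing$ and no edge of $G$ joins $A$ to $B$. Swapping names if necessary, assume $|B|\le|A|$.

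Every edge of $G$ then lies inside $A\cup S$ or inside $B\cup S$. We will build a linear functional (a ring specialization followed by extraction of a coefficient) that kills every $p_M$ with $M\in\Match_k(G)$ but not some $p_N$ with $N\in\Match_k(K_n)$. This shows $P_k(G)\subsetneq P_k(K_n)$.

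\emph{Case 1: $|B|+|S|\ge k$.} Use the $\kk$-algebra map $\varphi$ that sends $x_a\mapsto s$ for $a\in A$ and $x_b\mapsto t$ for $b\in B$, and fixes $x_v$ for $v\in S$. Here $s,t$ are new indeterminates. This map kills $p_e$ for every edge $e$ inside $A$ or inside $B$.

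Let $M$ be a $k$-matching of $G$. Its edges cannot join $A$ and $B$, so every edge with $\varphi(p_e)\ne0$ must touch $S$. Since the edges of $M$ are disjoint, at most $|S|<k$ of them touch $S$. Hence $\varphi(p_M)=0$.

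On the other side we need a $k$-matching $N$ of $K_n$ with no edge inside $A$ and none inside $B$. Then every factor $\varphi(p_e)$ is a nonzero linear form ($s-t$, $s-x_v$, $t-x_v$, or $x_v-x_w$), so $\varphi(p_N)\neq0$ because the target is a domain. To build $N$, take $\min(|B|,k)$ disjoint $A$--$B$ edges (possible since $|B|\le|A|$). Complete them using edges that have an endpoint in $S$, pairing $S$-vertices with unused vertices or with each other. A short count using $|B|+|S|\ge k$ and $n\ge 2k$ shows that $k$ edges are reached. This count is the routine check to carry out.

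\emph{Case 2: $|B|+|S|<k$.} Use the map $\psi$ that sends $x_v\mapsto t$ for all $v\in B\cup S$ and fixes the $x_a$ with $a\in A$. Then every edge of $G$ inside $B\cup S$ is killed. So for $M\in\Match_k(G)$, the only factors of $\psi(p_M)$ involving $t$ come from $A$--$S$ edges. There are at most $|S|$ of these, so $\deg_t\psi(p_M)\le|S|$.

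In $K_n$, take $N$ consisting of $|B|+|S|$ disjoint edges joining $B\cup S$ to distinct vertices of $A$, together with $k-|B|-|S|$ edges inside the remaining part of $A$. This is possible because it uses $2k\le n$ vertices, all but $|B|+|S|$ of which lie in $A$. Then $\psi(p_N)$ is a product of $|B|+|S|$ factors $x_a-t$ and factors free of $t$. So its coefficient of $t^{|B|+|S|}$ is $\pm$ a nonzero polynomial. Since $B\ne\varnothing$, we have $|B|+|S|>|S|$, so the functional ``coefficient of $t^{|B|+|S|}$ after $\psi$'' separates $p_N$ from $P_k(G)$.

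The main obstacle is the case split itself. A single specialization fails in unbalanced situations. For example, if $G$ has an isolated vertex $b$ (so $S=\varnothing$, $B=\{b\}$) and $k=2$, the first map finds no $2$-matching of $K_n$ surviving it. The second, degree-counting functional is designed to handle exactly these situations. The remaining work is the elementary matching count in Case 1.
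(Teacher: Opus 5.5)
Your proof is correct, and the matching count you left as routine does go through. If $|B|<k$, then after the $|B|$ cross edges, pairing $S$-vertices first with leftover $A$-vertices and then with each other yields $\min\{|S|,\lfloor n/2\rfloor-|B|\}\ge k-|B|$ further edges. If $|B|\ge k$, then $k$ cross edges already suffice. Your route does differ from the paper's, though.

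The paper uses one graded test rather than two. It takes the ideal $J$ generated by all differences $x_u-x_v$ with $u,v$ in a common component of $G-S$.
\begin{itemize}
\item Every $p_M$ with $M\in\Match_k(G)$ has at least $k-s$ factors in $J$, so $P_k(G)\subseteq J^{k-s}$.
\item The witness $Q$ is a $k$-matching of $K_n$ containing one edge between two components, plus one edge from each vertex of $S$ to a fresh vertex outside $S$. It has at most $k-s-1$ factors in $J$.
\item In adapted coordinates, the substitution $y_u\mapsto ty_u$ turns the number of factors in $J$ into an exact $t$-adic order. Hence $p_Q\notin J^{k-s}$.
\end{itemize}

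Your Case~1 map $\varphi$ is exactly reduction modulo the coarser ideal $J'\supseteq J$ attached to the two-block partition $\{A,B\}$. It is therefore the same test with exponent $1$ in place of $k-s$. That is why you need a witness with no factor in $J'$ at all. Such a witness exists only when $|B|+|S|\ge k$, which forces your second, top-degree-in-$t$ test for the unbalanced case.

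Filtering by powers of $J$ makes the witness requirement mild. Only $s+1$ non-$J$ factors are needed, and they always fit because $2s+2\le 2k\le n$. So the paper needs no case split, at the cost of the coordinate change and the valuation argument. Your version is more hands-on: two explicit substitutions and a coefficient extraction. It pays for this with the case analysis and the two matching counts.
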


\begin{proof}
Suppose that $S\subset V$ has size $s<k$ and that $G-S$ has at least two
components $C_1,\ldots,C_m$.  In the polynomial ring $R=\kk[x_v:v\in V]$,
define the ideal
\[
 J=(x_u-x_v:u,v\in C_i\text{ for some }i).
\]
The quotient $R/J$ is a polynomial ring with one variable for each $C_i$ and
one variable for each vertex of $S$.  Hence a linear factor $p_{uv}$ belongs
to $J$ exactly when $u$ and $v$ lie in the same component $C_i$.

Every $k$-matching of $G$ has at most $s$ edges incident with $S$.  All its
remaining edges lie inside the components $C_i$, so
\begin{align}
 p_M\in J^{k-s}
 \qquad \text{for each }M\in\Match_k(G).
 \label{pf:prop:k-connected:pMin}
\end{align}
Thus $P_k(G)\subseteq J^{k-s}$.

On the other hand, choose vertices $c_1\in C_1$ and $c_2\in C_2$.  Since
$n\geq2k$ and $s<k$, we have $s\leq n-s-2$, so
there are enough further vertices outside $S$ to pair
each vertex of $S$ with a distinct vertex outside
$S\cup\{c_1,c_2\}$.  Together with the edge $c_1c_2$, these give $s+1$
pairwise disjoint edges of $K_n$ whose linear factors $x_i - x_j$
do not belong to $J$.
Complete them arbitrarily to a $k$-matching $Q$ of $K_n$.
We shall show that $p_Q \notin J^{k-s}$.

Choose one vertex $c_i\in C_i$ in each component and use the coordinates
\[
 z_i:=x_{c_i},\qquad
 y_u:=x_u-x_{c_i}\quad(\text{for } u\in C_i\setminus\{c_i\}),
\]
together with the variables $x_s$ for $s\in S$.  In these coordinates
\[
 R=\kk[z_1,\ldots,z_m,\ x_s\ (s\in S),\ y_u],
 \qquad
 J=(y_u).
\]
Introduce an auxiliary variable $t$ and define the $\kk$-algebra morphism
$\sigma_t:R\to R[t]$ that fixes the $z_i$ and the $x_s$ and sends every
$y_u$ to $t y_u$.  If $f\in J^d$, then $\sigma_t(f)$ is divisible by $t^d$.

For a linear factor $p_{uv}=x_u-x_v$, the polynomial
$\sigma_t(p_{uv})$ is divisible by $t$ exactly when $u$ and $v$ lie in the
same component $C_i$; in that case it is divisible by $t$ exactly once.
Otherwise its constant term at $t=0$ is a nonzero linear form in the
$z_i$ and $x_s$.  Since the polynomial ring is a domain, the $t$-adic order
of a product of such factors is exactly the number of factors that
belong to $J$.

The matching $Q$ contains the $s+1$ chosen edges whose factors are not in
$J$, so at most $k-s-1$ of its factors belong to $J$.  Hence
$\sigma_t(p_Q)$ is not divisible by $t^{k-s}$, and therefore
\[
 p_Q\notin J^{k-s}.
\]
Thus, $P_k(K_n) \nsubseteq J^{k-s}$ (since $p_Q \in P_k(K_n)$).
Contrasting this with $P_k(G) \subseteq J^{k-s}$, we obtain
$P_k(G) \neq P_k(K_n)$, which contradicts the assumption that $G$
be $k$-saturated.
This is the contradiction we need to complete our proof.

(The above argument using $\sigma_t$ resembles the use of Rees algebras
in the study of ideal powers.)
\end{proof}

For $k\geq3$, $k$-connectivity is not sufficient.

\begin{example}\label{ex:prism}
Let $G$ be the triangular prism.  It is $3$-connected, but it has only
four perfect matchings.  Hence
\[
 \dim P_3(G)\leq4
 <5=\binom63-\binom62
 =\dim P_3(K_6),
\]
so $G$ is not $3$-saturated.
\end{example}

\subsection{$2$-connectivity implies $2$-saturation}

In degree $2$ the connectivity obstruction is sharp.
To prove this, we need some lemmas:

\begin{lemma}[Two-row branching sequence]\label{lem:p2-branching}
Let $V$ be a finite set with $|V|\geq4$, let $v\in V$, and put
$V':=V\setminus\{v\}$.  Every element of $P_2(K_V)$ is at most linear in
$x_v$, so coefficient extraction at $x_v$ defines a map
$[x_v]:P_2(K_V)\to P_1(K_{V'})$.  There is a short exact sequence
\begin{equation}\label{eq:p2-branching}
 0\longrightarrow P_2(K_{V'})
 \longrightarrow P_2(K_V)
 \xrightarrow{[x_v]} P_1(K_{V'})
 \longrightarrow0,
\end{equation}
where the first (nonzero) map is the evident inclusion.
\end{lemma}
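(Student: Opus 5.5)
We prove the lemma by checking the four parts of the sequence separately, using \cref{thm:complete-specht} only at the end, to compare dimensions.

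\emph{The map is well defined.} Each $p_M$ with $M\in\Match_2(K_V)$ is a product of two linear factors on disjoint vertex pairs. Hence $x_v$ occurs in at most one factor, so every element of $P_2(K_V)$ has $x_v$-degree at most $1$ and $[x_v]$ is defined on it. If $v\notin\bigcup M$, then $[x_v]p_M=0$. If $M=\{(v,w),(a,b)\}$ is oriented so that $p_{vw}=x_v-x_w$, then $[x_v]p_M=p_{ab}$, a generator of $P_1(K_{V'})$. So $[x_v]$ maps $P_2(K_V)$ into $P_1(K_{V'})$.

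\emph{Surjectivity and the zero composite.} Given $a,b\in V'$ distinct, the hypothesis $|V|\geq4$ gives a vertex $w\in V'\setminus\{a,b\}$. Then $\{vw,ab\}$ is a $2$-matching of $K_V$ with $[x_v]p_{\{vw,ab\}}=p_{ab}$, so $[x_v]$ is surjective. Elements of $P_2(K_{V'})$ do not involve $x_v$, so $P_2(K_{V'})\subseteq P_2(K_V)$ and $[x_v]$ kills it. This gives injectivity on the left and $\im\subseteq\ker$.

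\emph{Exactness in the middle, by counting dimensions.} This is the main step. Put $n=|V|$. By \cref{thm:complete-specht}, valid over any field by \cref{rmk:complete-poschar}:
\begin{itemize}
\item $\dim P_2(K_V)=\binom n2-\binom n1$;
\item $\dim P_2(K_{V'})=\binom{n-1}2-\binom{n-1}1$;
\item $\dim P_1(K_{V'})=n-2$.
\end{itemize}
We need
\[
\binom n2-n=\Bigl(\binom{n-1}2-(n-1)\Bigr)+(n-2).
\]
This follows from Pascal's rule $\binom n2=\binom{n-1}2+(n-1)$, since then the left side is $\binom{n-1}2-1$, and so is the right side.

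For $n=4$ we have $|V'|=3<4$, so $\lfloor 3/2\rfloor=1$ and the formula for $P_2(K_{V'})$ is not covered by the theorem. Directly, $P_2(K_3)=0$ since $K_3$ has no $2$-matchings, and the formula also gives $3-3=0$, so the count still holds. Since $\im\subseteq\ker$, $[x_v]$ is surjective, and the dimensions add up, we get $\ker[x_v]=P_2(K_{V'})$ and the sequence is exact.

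A direct alternative for the middle is to reduce modulo $x_v$: setting $x_v=0$ in the straightening relations shows every element of the kernel lies in the span of the $p_M$ avoiding $v$. The dimension count avoids this case work, so it is the route I would take.
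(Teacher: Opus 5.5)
Your proposal is correct and follows essentially the same route as the paper: the inclusion with zero composite, surjectivity via $[x_v]\bigl(p_{vw}p_{ab}\bigr)=\pm p_{ab}$, and exactness in the middle from the dimension count $\dim P_2(K_V)-\dim P_2(K_{V'})=n-2=\dim P_1(K_{V'})$ obtained from \cref{thm:complete-specht}. Your explicit check of the case $|V'|=3$ is a small point the paper passes over silently: there $k=2$ lies outside the theorem's stated range $k\le\lfloor |V'|/2\rfloor$, but $P_2(K_3)=0$ agrees with the formula.
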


\begin{proof}
The first map is injective, and its image lies in the kernel
of the last map.
The last map is surjective: for distinct $c,d\in V'$, choose
$a\in V'\setminus\{c,d\}$; then, up to sign,
\[
 [x_v](p_{va}p_{cd})=p_{cd}.
\]
The dimension formula in \cref{thm:complete-specht} gives
\[
 \dim P_2(K_V)-\dim P_2(K_{V'})=|V|-2
 =\dim P_1(K_{V'}),
\]
so that $\dim P_2(K_V) = \dim P_2(K_{V'}) + \dim P_1(K_{V'})$.
All that remains is to apply the easy linear-algebraic lemma
saying that if
\[
 0\longrightarrow A
 \overset{f}{\longrightarrow} B
 \overset{g}{\longrightarrow} C
 \longrightarrow0
\]
is a complex of finite-dimensional vector spaces such that
$f$ is injective and $g$ is surjective and
$\dim B = \dim A + \dim C$, then this complex is exact.
\end{proof}

\begin{lemma}[A degree-two vertex]\label{lem:minimal-2connected}
Every finite $2$-connected graph contains a spanning subgraph that is
edge-minimal subject to being $2$-connected.  Every such edge-minimal
subgraph has a vertex of degree $2$.
\end{lemma}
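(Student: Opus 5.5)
The first assertion is a finiteness argument. Let $G$ be $2$-connected. Among all spanning subgraphs of $G$ that are $2$-connected (a nonempty family, since it contains $G$), choose one, $H$, with the fewest edges. Then $H$ is edge-minimal: deleting any edge $e$ of $H$ leaves a graph $H-e$ that is not $2$-connected. (Any inclusion-minimal element works equally well.)

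For the second assertion I would use the classical ear decomposition theory of minimally $2$-connected graphs (Dirac, Plummer). The argument runs as follows.

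\begin{enumerate}
\item Fix an edge $e=uv$ of $H$. By minimality, $H-e$ is not $2$-connected. It is still connected and has at least three vertices, since removing one edge from a $2$-connected graph cannot disconnect it. So $H-e$ has a cut vertex. Its block--cutvertex tree is therefore a nontrivial tree, and $u$ and $v$ lie in non-cut positions of two distinct end blocks. Otherwise adding $e$ back would not destroy every cut vertex, contradicting the $2$-connectivity of $H$.
\item Take an ear decomposition $H=C\cup P_1\cup\cdots\cup P_r$, where $C$ is a cycle and each $P_i$ is a path whose two endpoints lie on earlier parts and whose interior is new.
\item If $r=0$, then $H$ is a cycle, and every vertex has degree $2$.
\item If $r\ge 1$, look at the last ear $P_r$. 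If $P_r$ has an interior vertex, that vertex has degree $2$ and we are done. Otherwise $P_r$ is a single edge (a chord), and deleting it leaves $C\cup P_1\cup\cdots\cup P_{r-1}$. This is still $2$-connected, contradicting edge-minimality.
\end{enumerate}

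Step 4 is the cleanest route, because minimality then forces every ear to be nontrivial. The only care needed is that the decomposition exists for any $2$-connected graph (Whitney's theorem) and that a union of a $2$-connected graph with an ear is again $2$-connected. With this, the last ear of an open ear decomposition of $H$ always supplies a degree-$2$ vertex, and the block-tree step 1 is not needed at all.

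The main obstacle is making sure the ear decomposition is used correctly. In particular, a trivial ear must be excluded by minimality; this does not contradict the existence of the decomposition, because the ear could simply be dropped. Everything else is routine.
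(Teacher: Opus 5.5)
Your proposal is correct and follows essentially the same route as the paper: obtain an edge-minimal spanning $2$-connected subgraph by finiteness, then argue that the last ear of an (open) ear decomposition cannot be a single edge, since deleting it would leave a spanning $2$-connected graph, so its interior vertices have degree $2$. Your step~1 on the block--cutvertex tree is correct but, as you note yourself, superfluous, and it can simply be omitted.
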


\begin{proof}
The first assertion follows simply by deleting edges as long as
$2$-connectivity is preserved.  For the second we use the standard ear
characterization of $2$-connected graphs; see
\cite[Theorem~25.4]{Lavrov}.  Let
$R_1,\ldots,R_q$ be an ear decomposition of the edge-minimal graph $H$.
If $H$ is a cycle, every vertex has degree $2$.  Otherwise, the final ear
$R_q$ cannot have length $1$: deleting that one edge would leave the union of
the preceding ears, which is still a spanning $2$-connected graph.
Thus $R_q$ has an internal vertex.  Since no later ear exists, every internal
vertex of $R_q$ has degree exactly $2$ in $H$.
\end{proof}

\begin{lemma}[Suppressing a degree-two vertex]\label{lem:suppress-degree-two}
Let $H$ be a $2$-connected graph with $|V(H)|\geq4$, and let
$v\in V(H)$ have degree $2$, with neighbors $a,b$.  Put
$V':=V(H)\setminus\{v\}$ and
\[
 H^+:=(H-v)+ab,
 \qquad
 H^-:=H^+-ab=(H-v)-ab.
\]
Thus $H^-$ is obtained from $H-v$ by deleting $ab$ if that edge was already
present; if $ab\notin E(H-v)$, then simply $H^-=H-v$.  Then $H^+$ is
$2$-connected and $H^-$ is connected.
\end{lemma}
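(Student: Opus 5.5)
We prove the two claims separately; both come from comparing paths in $H$ through $v$ with paths in the smaller graphs.

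\emph{$H^+$ is $2$-connected.} First, $H^+$ has $|V(H)|-1\geq 3$ vertices. Next we check that $H^+$ is connected and that $H^+-w$ is connected for every $w\in V'$. The basic observation is a path-rerouting one. Take any path in $H$ (or in $H-w$) between two vertices of $V'$. If this path passes through $v$, then it uses the segment $a\,v\,b$, because $v$ has only the neighbors $a$ and $b$. Replacing that segment by the edge $ab$ gives a walk in $H^+$ (or in $H^+-w$) between the same two endpoints. Since $H$ is connected, it follows that $H^+$ is connected. For $w\in V'$, the graph $H-w$ is connected because $H$ is $2$-connected, so the same rerouting shows that $H^+-w$ is connected. (When $w\in\{a,b\}$, no path in $H-w$ passes through $v$ between vertices of $V'$, since $v$ then has degree $\le 1$ in $H-w$. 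So nothing needs rerouting, and the path already lies in $H-v-w\subseteq H^+-w$.)

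\emph{$H^-$ is connected.} Since $H$ is $2$-connected, $H-a$ is connected, and it contains $v$ and $b$. In $H-a$ the vertex $v$ has degree $1$ (its only neighbor there is $b$), so removing $v$ leaves $H-a-v$ connected. Hence every vertex of $V'\setminus\{a\}$ is joined to $b$ by a path in $H-v-a$. Such a path avoids $a$, so it does not use the edge $ab$, and therefore it lies in $H^-$. It remains to show that $a$ is joined to the rest of $H^-$. By the symmetric argument, every vertex of $V'\setminus\{b\}$ is joined to $a$ in $H-v-b$, hence in $H^-$. Because $|V'|\geq 3$, there is a vertex $c\in V'\setminus\{a,b\}$. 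This $c$ is joined to both $a$ and $b$ in $H^-$, so $H^-$ is connected.

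The only delicate points are the degenerate cases: the case $w\in\{a,b\}$ in the first part, and the need for a third vertex $c$ in the second part, which is where the hypothesis $|V(H)|\geq 4$ is used. I expect these to be routine once they are made explicit. The case $ab\notin E(H-v)$ requires no separate treatment, because the arguments above never used the edge $ab$ of $H-v$.
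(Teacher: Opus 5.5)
Your proof is correct and follows essentially the paper's route. The $2$-connectivity of $H^+$ comes from the same rerouting of the segment $a\,v\,b$ through the new edge $ab$. Your argument for $H^-$, which glues the connected graphs $H-v-a$ and $H-v-b$ through a third vertex $c\in V'\setminus\{a,b\}$, is a direct version of the paper's proof by contradiction: the paper assumes $ab$ is a bridge of $H-v$ and uses the connectivity of $H-a$ and $H-b$ to force $V(H)=\{a,b,v\}$.
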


\begin{proof}
First consider $H^+$.  It has at least three vertices.  Let $w\in V'$.
We claim that $H^+-w$ is connected.
Indeed, given two vertices of
$H^+-w$, choose a path between them in $H-w$.  If this path uses $v$, then
$v$ is an internal vertex of the path, so both neighbors $a,b$ of $v$ are
present in $H-w$ (since $a$ and $b$ are the only neighbors of $v$ in $H$),
and the path uses the segment $a-v-b$ (or $b-v-a$).
Replacing this segment by the edge $ab$ gives a path in
$H^+-w$ connecting the same two vertices.  Thus $H^+-w$ is connected.
Hence deletion of any vertex leaves $H^+$
connected, so $H^+$ is $2$-connected.

Now consider $H^-$.  Since $H$ is $2$-connected, $H-v$ is connected.  If
$ab\notin E(H-v)$, there is nothing more to prove.  Suppose instead that
$ab\in E(H-v)$ and that deleting $ab$ disconnects $H-v$.  Then $ab$ is a
bridge of $H-v$.  Let $A$ and $B$ be the two components of
$(H-v)-ab$, with $a\in A$ and $b\in B$.  If $A\neq\{a\}$, then in $H-a$
the nonempty set $A\setminus\{a\}$ has no edge to the rest of the graph:
there is no such edge in $H-v-ab$, and the only edges incident with $v$ are
$av$ and $bv$.  This contradicts the connectedness of $H-a$.  Hence
$A=\{a\}$, and similarly $B=\{b\}$.  Thus $V(H)=\{a,b,v\}$, contradicting
$|V(H)|\geq4$.  Therefore $H^-$ is connected.

For example, if $H=C_4$ and $v$ is any vertex, then its neighbors $a,b$ are
not adjacent in $H$.  Thus $H-v$ is a three-vertex path,
$H^+$ is a triangle, and $H^-=H-v$ is that same path.
\end{proof}

\begin{theorem}[Degree-two saturation]\label{thm:degree-two-saturation}
Let $G$ be a graph on $n\geq4$ vertices.  Then
\[
 P_2(G)=P_2(K_n)
 \quad\Longleftrightarrow\quad
 G\text{ is $2$-connected}.
\]
\end{theorem}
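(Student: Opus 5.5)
The forward implication is the case $k=2$ of \cref{prop:k-connected}, since $n\geq4$ gives $2\leq\lfloor n/2\rfloor$. So the work lies in showing that every $2$-connected graph $G$ on $n\geq4$ vertices satisfies $P_2(G)=P_2(K_n)$.

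By \cref{prop:basic-properties}(4), $P_2$ is monotone under passing to spanning supergraphs. By \cref{lem:minimal-2connected}, it therefore suffices to treat an edge-minimal $2$-connected graph $H$ on the vertex set $V$. We argue by induction on $|V|$ and fix a vertex $v$ of degree $2$ with neighbours $a,b$. By \cref{lem:p2-branching}, it is enough to prove two things:
\begin{enumerate}
\item $P_2(K_{V'})\subseteq P_2(H)$;
\item $[x_v]\bigl(P_2(H)\bigr)=P_1(K_{V'})$.
\end{enumerate}
Indeed, these together give $\dim P_2(H)\geq\dim P_2(K_V)$, and the inclusion $P_2(H)\subseteq P_2(K_V)$ then forces equality.

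For (2), the coefficient $[x_v]$ of $p_{va}p_{e}$ is $\pm p_e$ for every edge $e$ of $H-v$ disjoint from $a$, and similarly with $b$ in place of $a$. We also have
\[
 p_{va}-p_{vb}=p_{ba},
\]
and, for any edge $e$ of $H$ avoiding both $a$ and $b$, the polynomial $p_{va}p_e-p_{vb}p_e=p_{ba}p_e$ lies in $P_2(H)$. The image of $[x_v]$ is thus spanned by the $p_e$ with $e\in E(H-v)$ missing $a$ or missing $b$. This covers every edge of $H-v$ except $ab$ itself, so it spans $P_1$ of a graph containing $H^-$. Since $H^-$ is connected on $V'$ by \cref{lem:suppress-degree-two}, \cref{prop:basic-properties}(3) shows that this image is all of $P_1(K_{V'})$, which has dimension $|V'|-1$.

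For (1), we would like to apply the induction hypothesis to $H^+$, which is $2$-connected by \cref{lem:suppress-degree-two}. This is legitimate when $|V'|\geq4$. The base case $|V'|=3$, i.e.\ $n=4$, is checked by hand: $H$ is then $C_4$, or a $2$-connected graph on $4$ vertices containing $C_4$, and the two disjoint pairs of a $4$-cycle already give two independent elements of the $2$-dimensional space $P_2(K_4)$. In that case we in fact bypass the branching and verify directly. The induction yields $P_2(K_{V'})=P_2(H^+)$, so it remains to show $P_2(H^+)\subseteq P_2(H)$. Generators of $P_2(H^+)$ not using the edge $ab$ already lie in $P_2(H)$, since they come from $2$-matchings of $H-v$. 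For a generator $p_{ab}p_{cd}$ with $cd\in E(H-v)$ disjoint from $\{a,b\}$, we use the triangle Garnir relation \eqref{eq:garnir-triangle}:
\[
 p_{ab}p_{cd}=(p_{av}+p_{vb})p_{cd},
\]
and both $\{av,cd\}$ and $\{vb,cd\}$ are $2$-matchings of $H$. This substitution realizes the edge $ab$ through the path $a-v-b$, and it establishes (1).

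The main obstacle is bookkeeping in the base case and in making sure that the inductive hypothesis applies. In particular, $H^+$ may fail to be edge-minimal, but this does not matter because the statement being proved by induction is about all $2$-connected graphs; minimality is only used to locate $v$. With this, the branching argument goes through cleanly.
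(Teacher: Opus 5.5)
Your proposal is correct and follows the paper's proof essentially step for step. You pass to an edge-minimal $2$-connected spanning subgraph $H$ with a degree-$2$ vertex $v$ and apply the branching sequence \eqref{eq:p2-branching}: containment of $P_2(K_{V'})$ comes from induction on $H^+$ plus the triangle Garnir relation, and surjectivity of $[x_v]$ comes from connectivity of $H^-$. The differences are cosmetic: you check $n=4$ directly via the two perfect matchings of $C_4$, whereas the paper notes that then $H^+=K_{V'}$ and $P_2(K_3)=0$, so your inclusion (1) is vacuous; and your aside about $p_{va}-p_{vb}=p_{ba}$ is not needed for (2).
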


\begin{proof}
The forward implication is \cref{prop:k-connected}.  Conversely, let $G$ be
$2$-connected and choose, by \cref{lem:minimal-2connected}, a spanning
edge-minimal $2$-connected subgraph $H\subseteq G$ and a degree-$2$
vertex $v$ of $H$, with neighbors $a,b$.  Put $V':=V\setminus\{v\}$ and
\[
 H^+=(H-v)+ab,
 \qquad
 H^-=(H-v)-ab.
\]
By \cref{lem:suppress-degree-two}, $H^+$ is $2$-connected and $H^-$ is
connected.  If $|V'|=3$, then $H^+=K_{V'}$; otherwise induction on $n$ gives
\[
 P_2(H^+)=P_2(K_{V'}).
\]

We want to prove that $P_2(G)=P_2(K_n)$.  Since
$P_2(H)\subseteq P_2(G)\subseteq P_2(K_V) = P_2(K_n)$, it is enough to prove
$P_2(H)=P_2(K_V)$.

Recall the elementary
linear-algebra fact that if
\[
 0\longrightarrow A\longrightarrow B\longrightarrow C\longrightarrow0
\]
is a short exact sequence of vector spaces,
and if $W\subseteq B$ is a subspace that contains $A$
and surjects onto $C$, then $W=B$.
Applying this to the short exact sequence \eqref{eq:p2-branching}
and the subspace $P_2(H)$ of $P_2(K_V)$, we see that
it suffices to show that $P_2(H)$ contains
$P_2(K_{V'})$ and surjects onto $P_1(K_{V'})$ under $[x_v]$.

We start with the first claim.  Since $P_2(H^+)=P_2(K_{V'})$, it is enough to
show that every matching polynomial $p_M \in P_2(H^+)$ belongs to $P_2(H)$.
So consider a $2$-matching $M$ of $H^+$.
If $ab \notin M$, then $M$ is already a $2$-matching of $H$,
and thus $p_M \in P_2(H)$ follows.  If
$ab \in M$, then we let $cd$ be the other edge of $M$
(so $p_M = p_{ab}p_{cd}$); then, 
the triangle Garnir relation \eqref{eq:garnir-triangle} gives
\[
 p_{ab}p_{cd}
 =(p_{av}+p_{vb})p_{cd}
 =p_{av}p_{cd}+p_{vb}p_{cd},
\]
and both terms on the right come from $2$-matchings of $H$,
so the sum again belongs to $P_2(H)$.
Hence
\[
 P_2(K_{V'})=P_2(H^+)\subseteq P_2(H).
\]

It remains to prove the surjection claim.
For every edge $cd$ of the connected graph
$H^-$, at least one of $a,b$ is not incident with $cd$.  Pairing $cd$ with
the corresponding edge $av$ or $bv$ gives a $2$-matching of $H$ whose
$x_v$-coefficient is, up to sign, $p_{cd}$.  Thus,
$p_{cd} \in [x_v]P_2(H)$.
Therefore
\[
 [x_v]P_2(H)\supseteq P_1(H^-)=P_1(K_{V'}),
\]
where the last equality follows from the connectivity of $H^-$.
That is, $P_2(H)$ surjects onto $P_1(K_{V'})$ under $[x_v]$.
The preceding
linear-algebra observation now yields $P_2(H)=P_2(K_V)$, and hence
$P_2(G)=P_2(K_n)$.
\end{proof}

\subsection{$k$-linkedness implies $k$-saturation}

There is also a simple sufficient condition in every degree.

\begin{definition}
A graph $G$ is \emph{$k$-linked} if, whenever
$a_1,b_1,\ldots,a_k,b_k$ are distinct vertices, there exist pairwise
vertex-disjoint paths $Q_i$ joining $a_i$ to $b_i$ for
$1\leq i\leq k$.
(Paths are defined with respect to the undirected graph $G$; they
need not conform with the orientation we chose for the edges.)
\end{definition}

\begin{proposition}[Linkedness criterion]\label{prop:k-linked}
If $G$ is $k$-linked, then $G$ is $k$-saturated.
\end{proposition}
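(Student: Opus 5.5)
Since $P_k(K_n)$ is spanned by the $p_M$ for $M\in\Match_k(K_n)$, it suffices to show that $p_M\in P_k(G)$ for every $k$-matching $M=\{a_1b_1,\ldots,a_kb_k\}$ of $K_n$. The vertices $a_1,b_1,\ldots,a_k,b_k$ are distinct, so $k$-linkedness supplies pairwise vertex-disjoint paths $Q_i$ in $G$ joining $a_i$ to $b_i$. The plan is to expand each factor $p_{a_ib_i}$ as a telescoping sum along $Q_i$, to multiply these expansions, and to check that every resulting product comes from a $k$-matching of $G$.

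Concretely, suppose $Q_i$ is the path $a_i=w_0,w_1,\ldots,w_r=b_i$. Repeated use of the triangle Garnir relation \eqref{eq:garnir-triangle} (equivalently, simply $x_{a_i}-x_{b_i}=\sum_t(x_{w_{t-1}}-x_{w_t})$) gives
\[
 p_{a_ib_i}=\sum_{t=1}^{r}p_{w_{t-1}w_t},
\]
where each $p_{w_{t-1}w_t}$ equals $\pm p_e$ for the edge $e=w_{t-1}w_t$ of $G$. Multiplying over $i$ yields
\[
 p_M=\prod_{i=1}^k\sum_{e\in E(Q_i)}\pm p_e=\sum_{(e_1,\ldots,e_k)}\pm p_{e_1}\cdots p_{e_k},
\]
where the sum runs over all choices with $e_i\in E(Q_i)$ for each $i$.

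Because the paths $Q_i$ are pairwise vertex-disjoint, any choice $e_1\in E(Q_1),\ldots,e_k\in E(Q_k)$ consists of pairwise disjoint edges of $G$. Hence $\{e_1,\ldots,e_k\}$ is a $k$-matching $N$ of $G$, and the corresponding term is $\pm p_N\in P_k(G)$. It follows that $p_M\in P_k(G)$, and therefore $P_k(K_n)\subseteq P_k(G)$. The reverse inclusion $P_k(G)\subseteq P_k(K_n)$ always holds, so $G$ is $k$-saturated.

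No step here is a real obstacle. The point that needs checking is that vertex-disjointness, and not merely edge-disjointness, of the paths is what makes the chosen edges form a matching. The edge case $k=0$ is trivial, and the definition of $k$-linkedness tacitly requires $n\ge 2k$, which is exactly the range in which $k$-saturation is defined.
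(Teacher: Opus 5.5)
Your proof is correct and is essentially the paper's argument: you telescope each $x_{a_i}-x_{b_i}$ along the vertex-disjoint path $Q_i$, multiply out, and note that every choice of one edge per path is a $k$-matching of $G$. One small quibble: the paper's definition of $k$-linked does not tacitly require $n\ge 2k$ (the condition is simply vacuous when $n<2k$), but this is harmless, since in that case both $P_k(G)$ and $P_k(K_n)$ are zero.
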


\begin{proof}
Take an arbitrary $k$-matching
$M=\{\{a_1,b_1\},\ldots,\{a_k,b_k\}\}$ of $K_n$, and choose pairwise
vertex-disjoint paths $Q_i$ in $G$ joining $a_i$ to $b_i$.  Along each path,
\[
 x_{a_i}-x_{b_i}
 =\sum_{uv\in E(Q_i)}\varepsilon_{uv}(x_u-x_v),
 \qquad \varepsilon_{uv}\in\{\pm1\}.
\]
Multiplying these $k$ identities expresses $p_M$ as a linear combination of
products obtained by choosing one edge from each $Q_i$.  Since the paths are
vertex-disjoint, every such choice is a $k$-matching of $G$.  Hence
$p_M\in P_k(G)$.  Thus $P_k(K_n)\subseteq P_k(G)$, and the result follows.
\end{proof}

Thus, for $1\leq k\leq\lfloor n/2\rfloor$, we have the useful chain
\[
 k\text{-linked}
 \quad\Longrightarrow\quad
 k\text{-saturated}
 \quad\Longrightarrow\quad
 k\text{-connected}.
\]
The second implication is an equivalence for $k=1,2$, but
\cref{ex:prism} shows that it is not an equivalence for $k=3$.

\subsection{Matching-complement implies $k$-saturation}

The Garnir relations also settle another useful family of graphs.

\begin{theorem}[Matching-complement saturation]\label{thm:matching-complement}
Let $G$ be a graph on $n\geq3$ vertices whose complement $\overline G$ is a
matching.  Then $G$ is $k$-saturated for every
$0\leq k\leq\lfloor n/2\rfloor$.
\end{theorem}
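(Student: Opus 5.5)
The plan is to show $P_k(K_n)\subseteq P_k(G)$ by rewriting each generator $p_M$ of $P_k(K_n)$, for $M$ a $k$-matching of $K_n$, as a linear combination of $p_{M'}$ for $k$-matchings $M'$ of $G$. The reverse inclusion $P_k(G)\subseteq P_k(K_n)$ is automatic. Call an edge of $M$ \emph{bad} if it is not an edge of $G$, that is, if it lies in the matching $\overline G$. I will induct on the number of bad edges of $M$. If there are none, then $M$ is a $k$-matching of $G$ and there is nothing to do. The case $k=0$ is trivial.

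The key observation is the following. Let $ab$ be a bad edge. Since $\overline G$ is a matching and $ab\in\overline G$, the vertex $a$ has $b$ as its unique non-neighbour in $G$, and likewise for $b$. Hence for every vertex $c\notin\{a,b\}$, both $ac$ and $bc$ are edges of $G$. Moreover, if $cd\in M$ is another edge, then $ac,ad,bc,bd$ are all edges of $G$.

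Now suppose $M$ has a bad edge $ab$; I split into two cases. In the first case, $M$ is not perfect, so $2k<n$ and there is a vertex $c$ not covered by $M$. I apply the triangle Garnir relation \eqref{eq:garnir-triangle} with $N=M\setminus\{ab\}$ to get
\[
p_M=p_N p_{ab}=p_N(p_{ac}+p_{cb})
\]
(signs as determined by orientations). Each term $p_N p_{ac}$ and $p_N p_{cb}$ is, up to sign, $p_{M'}$ for a $k$-matching $M'$ of $K_n$, since $c$ was uncovered. The new edge is an edge of $G$, so $M'$ has one fewer bad edge. In the second case, $M$ is perfect, so $n=2k$. Since $n\geq3$, we have $k\geq2$, so $M$ contains another edge $cd$. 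I apply the square Garnir relation \eqref{eq:garnir-square} with $N=M\setminus\{ab,cd\}$ to get
\[
p_M=p_N p_{ab}p_{cd}=p_N(p_{ac}p_{bd}-p_{ad}p_{bc}).
\]
Both resulting perfect matchings replace $ab,cd$ by two edges of $G$, so the number of bad edges drops by at least one (by two if $cd$ was also bad). In either case the induction hypothesis applies to every term, and hence $p_M\in P_k(G)$.

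I do not expect a serious obstacle. The only point needing care is that the replacement terms really are matchings whose new edges are edges of $G$. This is exactly where the hypothesis that $\overline G$ is a matching enters, through the observation above. The other delicate point is the perfect-matching case, where no free vertex exists. That is why the four-term relation is needed there and why $n\geq3$ is assumed: it excludes $n=2$, $k=1$, where the single edge may be missing.
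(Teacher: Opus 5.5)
Your proposal is correct and follows the paper's proof essentially step for step: induction on the number of forbidden edges of $M$, using the triangle Garnir relation with an uncovered vertex $c$ when $2k<n$, and the four-vertex Garnir relation with a second edge $cd\in M$ when $2k=n$. In the perfect case the count of bad edges can drop by two, so state the induction as strong induction, as the paper does.
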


\begin{proof}
Call the edges of $\overline G$ \emph{forbidden}.  We must show that
$p_M \in P_k(G)$ for each $k$-matching $M$ of $K_n$.
We shall show this by strong induction on
the number of forbidden edges in $M$.
There is nothing to prove if $M$ contains no forbidden
edge.  Suppose that $ab\in M$ is forbidden.

If $2k<n$, choose an unmatched vertex $c$.  Since the forbidden edges form a
matching, both $ac$ and $cb$ are edges of $G$.  The triangle relation
\eqref{eq:garnir-triangle} gives
\[
 p_{ab}=p_{ac}+p_{cb}.
\]
After multiplication by the factors belonging to $M\setminus\{ab\}$, this
expresses $p_M$ as a sum of two matching polynomials $p_N$ and $p_R$
where $N$ and $R$ each have one fewer forbidden edge than $M$.
By induction, this shows that $p_M \in P_k(G)$.

If $2k=n$, then $n\geq4$ and $k\geq2$.  Choose another edge $cd\in M$.
Again the matching condition on $\overline G$ implies that all four cross
edges $ac,ad,bc,bd$ belong to $G$.  The four-vertex Garnir relation
\eqref{eq:garnir-square} gives
\[
 p_{ab}p_{cd}
 =p_{ac}p_{bd}-p_{ad}p_{bc}.
\]
Multiplying by the remaining factors again decreases the number of forbidden
edges (by $1$ or by $2$).  Induction completes the proof.
\end{proof}

\begin{corollary}[Multipartite saturation]\label{cor:multipartite}
Let $r\geq2$ and let
\[
 G=K_{2,\ldots,2}
\]
be the complete $r$-partite graph with all $r$ parts of size $2$.  Then
\[
 P_k(G)=P_k(K_{2r})\qquad \text{ for all }0\leq k\leq r,
\]
and therefore
\[
 \Hilb(\mathcal D(G),t)
 =\sum_{k=0}^r
 \left(\binom{2r}{k}-\binom{2r}{k-1}\right)t^k.
\]
\end{corollary}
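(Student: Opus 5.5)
This corollary is a direct specialization of \cref{thm:matching-complement}, combined with the Hilbert series computation of \cref{thm:complete-specht} and the duality \eqref{eq:dimD=dimP}. The plan is therefore short.

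First I will observe that the complement of $G=K_{2,\ldots,2}$ (with $r$ parts of size $2$) is a perfect matching on $n=2r$ vertices. Indeed, two vertices are non-adjacent in $G$ exactly when they lie in the same part. Each part has exactly two vertices, so the edges of $\overline G$ are precisely the $r$ pairs forming the parts, and these are pairwise disjoint.

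Next I need $n\geq3$ in order to apply \cref{thm:matching-complement}. Since $r\geq2$, we have $n=2r\geq4$. The theorem then gives $P_k(G)=P_k(K_{2r})$ for every $0\leq k\leq\lfloor 2r/2\rfloor=r$, which is the first claim.

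For the Hilbert series, I will combine \eqref{eq:dimD=dimP} with \cref{thm:complete-specht}. These give
\[
\dim\mathcal D(G)_k=\dim P_k(G)=\dim P_k(K_{2r})=\binom{2r}{k}-\binom{2r}{k-1}
\qquad \text{for } 0\leq k\leq r.
\]
For $k>r$ we have $k>\nu(G)=r$, so $\mathcal D(G)_k=0$ by \cref{prop:basic-properties}. Summing over $k$ then yields the displayed formula. There is no genuine obstacle here; the only point to check carefully is that $\overline G$ really is a matching, which was verified above.
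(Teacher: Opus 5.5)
Your proposal is correct and follows essentially the paper's own route: note that $\overline G$ is the perfect matching formed by the $r$ within-part pairs, apply \cref{thm:matching-complement}, and read off the Hilbert series from \cref{thm:complete-specht} via \eqref{eq:dimD=dimP}. Your explicit checks that $n=2r\geq4$ and that $\mathcal D(G)_k=0$ for $k>r$ only spell out details the paper leaves implicit.
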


\begin{proof}
The complement of $K_{2,\ldots,2}$ is the perfect matching consisting of the
$r$ within-part pairs, so \cref{thm:matching-complement} applies.
The Hilbert series then follows from \cref{thm:complete-specht}.
\end{proof}

\begin{remark}
The restriction $r\geq2$ is necessary: for $r=1$ the graph has two vertices
and no edges, so it is not saturated in degree $1$.
\end{remark}

For $r=5$, \cref{cor:multipartite} gives
\[
 \Hilb(\mathcal D(G),t)
 = 1+9t+35t^2+75t^3+90t^4+42t^5.
\]

\subsection{Matching-equivalent graphs}

The matching numbers $m_k(G)$ of $G$ are the coefficients of the Hilbert series of
$\mathcal M(G)$, but they do not determine the Hilbert series of
$\mathcal D(G)$.
Let us see when they do agree:

\begin{proposition}[Forests]\label{prop:forest-full}
For every graph $G$, we have the equivalence
\[
 \mathcal D(G)=\mathcal M(G)
 \quad\Longleftrightarrow\quad
 G\text{ is a forest}.
\]
Consequently, if $G$ is a forest, then
\[
 \Hilb(\mathcal D(G),t)=\sum_{k\geq0}m_k(G)t^k.
\]
\end{proposition}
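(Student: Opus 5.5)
For the forward direction ($\Leftarrow$), I would follow the remark in \cref{ex:small-dunkl}. If $G$ is a forest, its incidence matrix $(\varepsilon_{v,e})$ has rank $|V|-c(G)=|E|$, so its columns are linearly independent. Equivalently, the linear map $\kk^V\to\mathcal M(G)_1$ sending $e_v\mapsto\theta_v$ is surjective onto $\Span\{u_e\}$. Concretely, each $u_e$ is a $\kk$-linear combination of the $\theta_v$; one can see this by repeatedly peeling a leaf $v$ with pendant edge $e$, which gives $u_e=\pm\theta_v$ plus combinations of earlier edges. Since $\mathcal M(G)$ is generated as an algebra by the $u_e$, and all of them lie in the subalgebra $\mathcal D(G)$, we get $\mathcal D(G)=\mathcal M(G)$. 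The Hilbert series formula then follows from \cref{prop:matching-basis}.

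For the converse ($\Rightarrow$), I would compare degree-$1$ dimensions. By \cref{prop:basic-properties}(3), $\dim\mathcal D(G)_1=|V|-c(G)$, while $\dim\mathcal M(G)_1=|E|$ by \cref{prop:matching-basis}. If $\mathcal D(G)=\mathcal M(G)$, then $|E|=|V|-c(G)$, and a graph with this edge count is exactly a forest. (Any cycle would make $|E|>|V|-c(G)$, since a spanning forest already has $|V|-c(G)$ edges.)

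There is no real obstacle. The only point needing care is the rank statement for the incidence matrix over an arbitrary field $\kk$. It holds because the incidence matrix of a graph is totally unimodular, or more directly because the leaf-peeling argument gives the independence and the expressions for the $u_e$ over any field.
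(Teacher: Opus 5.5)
Your proof is correct and takes essentially the same approach as the paper. You compare degree-one parts using $\dim\mathcal D(G)_1=|V|-c(G)$ (the rank of the incidence matrix, valid over any field) and $\dim\mathcal M(G)_1=|E|$, then use that $\mathcal M(G)$ is generated by the $u_e$; your leaf-peeling remark is just a concrete way of seeing the same rank fact.
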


\begin{proof}
The degree-one part $\mathcal D(G)_1$ is the row space of an oriented
incidence matrix of $G$, whereas $\mathcal M(G)_1$ has the edge generators
$u_e$ as a basis.  Hence
\[
 \dim\mathcal D(G)_1=|V|-c(G),
 \qquad
 \dim\mathcal M(G)_1=|E|.
\]
If $G$ is a forest, then $|E|=|V|-c(G)$, so the incidence rows span all of
$\mathcal M(G)_1$.  Thus every generator $u_e$ belongs to $\mathcal D(G)$,
and therefore $\mathcal D(G)=\mathcal M(G)$.  Conversely, equality of the
two algebras implies equality in degree one, hence
$|E|=|V|-c(G)$, which is equivalent to every connected component of $G$
being a tree.
\end{proof}

For brevity, write
\[
 \mathfrak m_G(t):=\sum_{k\geq0}m_k(G)t^k
\]
for the matching generating polynomial of $G$.
The next result shows explicitly what information beyond $\mathfrak m_G(t)$
can enter even when $G$ has only one cycle.
We recall that a graph is said to be \emph{unicyclic} if it is
connected and has exactly one cycle.
Equivalently, a unicyclic graph is a graph obtained by adding
an edge to a tree.
Thus, any unicyclic graph has equally many vertices and edges.

\begin{theorem}[Unicyclic graphs]\label{thm:unicyclic-hilbert}
Let $G$ be a unicyclic graph, let $C$ be its unique cycle, and put
$F:=G-V(C)$.  Choose signs $\epsilon_e\in\{\pm1\}$, for $e\in C$, such that
\[
 \sum_{e\in C}\epsilon_e p_e=0,
\]
and put
\[
 c_C:=\sum_{e\in C}\epsilon_e u_e\in\mathcal M(G).
\]
Also set
\[
 \mathcal P(G):=\bigoplus_{k\geq0}P_k(G),
\]
regarded only as a graded vector space, and let
\[
 \rho_G:\mathcal M(G)\longrightarrow\mathcal P(G),
 \qquad
 u_M\longmapsto p_M
\]
be the graded linear map defined on the matching basis of $\mathcal M(G)$.
Then there is a short exact sequence of graded vector spaces
\begin{equation}\label{eq:unicyclic-exact-sequence}
 0\longrightarrow \mathcal M(F)(-1)
 \xrightarrow{\ \cdot c_C\ }
 \mathcal M(G)
 \xrightarrow{\ \rho_G\ }
 \mathcal P(G)
 \longrightarrow0.
\end{equation}
Here $\mathcal M(F)(-1)_k=\mathcal M(F)_{k-1}$.

Consequently, with the convention $m_{-1}(F)=0$, we have
\[
 \dim P_k(G)=m_k(G)-m_{k-1}(F)
 \qquad \text{ for all }k\geq0,
\]
or equivalently,
\[
 \Hilb(\mathcal D(G),t)=\mathfrak m_G(t)-t\mathfrak m_F(t).
\]
\end{theorem}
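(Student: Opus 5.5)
The plan is to verify directly that the three maps in \eqref{eq:unicyclic-exact-sequence} form a complex with the outer maps injective and surjective respectively. Exactness in the middle will then follow from a dimension count, proved by induction on the number of vertices off the cycle.

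\textbf{The complex.} For a matching $N$ of $F$, no edge of $N$ touches $V(C)$, so $N\cup\{e\}$ is a matching of $G$ for each $e\in C$. Hence
\[
 c_Cu_N=\sum_{e\in C}\epsilon_e u_{N\cup\{e\}} .
\]
The matching $N\cup\{e\}$ determines $N$: it is the set of edges not on $C$. So the images of distinct basis vectors $u_N$ have disjoint supports in the matching basis of \cref{prop:matching-basis}, and $\cdot c_C$ is injective. Next, $\rho_G(c_Cu_N)=p_N\sum_{e\in C}\epsilon_ep_e=0$, so the composite vanishes. Finally, $\rho_G$ is surjective by the definition of $P_k(G)$.

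\textbf{Reduction to a dimension count.} With these three facts, exactness is equivalent to the identity
\[
 \dim P_k(G)=m_k(G)-m_{k-1}(F)\qquad\text{for all }k.
\]
The Hilbert series formula then follows from \eqref{eq:dimD=dimP}. I would prove this identity for every graph $G$ consisting of one unicyclic component together with tree components, where $F$ is $G$ minus the cycle vertices. The induction is on the number of vertices not on $C$.

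\textbf{Inductive step (leaf removal).} Suppose some vertex $\ell\notin V(C)$ is a leaf, with neighbour $w$. Only the edge $\ell w$ involves $x_\ell$. Hence coefficient extraction at $x_\ell$ gives a short exact sequence, exactly as in \cref{lem:p2-branching}:
\[
 0\to P_k(G-\ell)\to P_k(G)\xrightarrow{[x_\ell]}P_{k-1}(G-\ell-w)\to0 .
\]
Surjectivity holds because $[x_\ell]\,p_{\ell w}p_N=\pm p_N$. Exactness in the middle holds because $P_k(G-\ell)$ is exactly the $x_\ell$-free part of $P_k(G)$: every $p_M$ either avoids $x_\ell$ or is $p_{\ell w}p_N$ with $N$ a matching of $G-\ell-w$. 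Therefore
\[
 \dim P_k(G)=\dim P_k(G-\ell)+\dim P_{k-1}(G-\ell-w),
\]
and $m_k$ obeys the same recursion. I then check that the correction term matches in two cases.
\begin{itemize}
\item If $w\notin V(C)$, the removed part lies in $F$, and $m_{k-1}(F)=m_{k-1}(F-\ell)+m_{k-2}(F-\ell-w)$.
\item If $w\in V(C)$, then $G-\ell-w$ is a forest, so its term equals its matching count by \cref{prop:forest-full} and \cref{thm:product-realization}. Here $\ell$ is isolated in $F$, so $m(F)=m(F-\ell)$.
\end{itemize}
Tree components are handled by \cref{prop:basic-properties}(1) and the forest case. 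Such a leaf always exists unless $G$ is a bare cycle plus isolated vertices.

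\textbf{Base case (the cycle).} The main obstacle is $G=C_m$. Here $F$ is edgeless, so the claim reads: $\dim P_1(C_m)=m-1$, and $\dim P_k(C_m)=m_k(C_m)$ for $k\ge2$; that is, the $p_M$ with $k\ge2$ are linearly independent. The case $k=1$ is \cref{prop:basic-properties}(3). For $k\ge2$, I would pick a vertex $v$ with cycle neighbours $a,b$ and use $[x_v]$ again. The $x_v$-free part is $P_k$ of the path $C_m-v$, which by \cref{prop:forest-full} has dimension equal to its matching count. The image of $[x_v]$ is spanned by $P_{k-1}(C_m-v-a)+P_{k-1}(C_m-v-b)$, sums over two overlapping paths. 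I would show its dimension is $m_{k-1}(C_m-v-a)+m_{k-1}(C_m-v-b)$, i.e.\ that this sum is direct when $k-1\ge1$. To do this I would extract a further coefficient at $x_a$ (or $x_b$) to separate the two families; this computation is the step I expect to need the most care. Small cycles ($m\le4$) would be checked by hand, e.g.\ via \cref{ex:small-dunkl}.
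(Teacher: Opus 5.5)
Your check that the maps form a complex, with the first map injective and the last surjective, is correct. So are the reduction of middle exactness to $\dim P_k(G)=m_k(G)-m_{k-1}(F)$ and the leaf-removal step. The leaf step is a genuinely different route from the paper's.

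The gap is in the base case $G=C_m$, which carries the real content: the linear independence of the $p_M$ for $k$-matchings $M$ of $C_m$ with $k\ge2$. Put $A:=P_{k-1}(C_m-v-a)$ and $B:=P_{k-1}(C_m-v-b)$.

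Your claim that $A+B$ is direct is false. Both spaces contain $P_{k-1}(C_m-\{v,a,b\})$, which is nonzero whenever $m\ge 2k+1$. For instance, take $C_5$ with vertices $1,2,3,4,5$ in cyclic order and $v=1$, $a=2$, $b=5$. Both paths $3-4-5$ and $2-3-4$ contain the edge $34$, so $p_{34}\in A\cap B$ and $\dim(A+B)=3$, not $4$. No further coefficient extraction can change this.

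Your companion claim, that the $x_v$-free part of $P_k(C_m)$ is $P_k(C_m-v)$, fails for the same reason. For $h\in A\cap B$, the element $p_{ab}h=p_{av}h+p_{vb}h$ lies in $P_k(C_m)$ and is $x_v$-free. In the example, $p_{25}p_{34}=p_{21}p_{34}+p_{15}p_{34}$ is $x_1$-free, but it is not a multiple of $p_{23}p_{45}$, which spans $P_2(C_5-1)$.

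The two errors cancel numerically ($2+3=1+4$), which is why your count looks right. But the argument cannot be carried out as planned. What the count actually needs is $P_k(C_m-v)\cap p_{ab}(A\cap B)=0$, and that is again an independence statement of the original kind, so the plan does not close.

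The missing idea is that the linear forms $p_e$, $e\in C$, satisfy only the single relation $\sum_{e\in C}\epsilon_ep_e=0$. Fix $e_0\in C$. The $p_e$ with $e\in C\setminus\{e_0\}$ are linearly, hence algebraically, independent, and $p_{e_0}=-\epsilon_{e_0}\sum_{e\ne e_0}\epsilon_ep_e$.

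Now take a $k$-matching $M\ni e_0$ with $k\ge2$ and any $e\in M\setminus\{e_0\}$. In these independent variables, the non-squarefree monomial $p_e^2\,p_{M\setminus\{e_0,e\}}$ occurs in $p_M$ and in no other $p_{M'}$. Hence every linear relation among the $p_M$ has zero coefficients at all $M\ni e_0$. The remaining $p_M$ are distinct squarefree monomials, hence independent.

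This is the paper's argument in coordinates. The paper shows that $\ker(y_e\mapsto p_e)=(\ell)$ with $\ell=\sum_{e\in C}\epsilon_ey_e$. It then shows that if $\ell q$ is a combination of matching monomials, squarefreeness forces $q$ to contain no cycle variable, so $q$ is a combination of matching monomials of $F$. That gives middle exactness for every unicyclic graph at once. Once the cycle case is settled this way, your leaf induction becomes unnecessary, though it remains a valid alternative.
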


\begin{proof}
The first map in \eqref{eq:unicyclic-exact-sequence} is well defined because
every edge of $F$ is disjoint from every edge of $C$.
Now, we shall show that it is injective.

Indeed, for any matching $N$ of $F$, we have
\[
 c_Cu_N = \sum_{e\in C}\epsilon_e u_e u_N
 = \sum_{e\in C}\epsilon_e u_{N\cup\{e\}}.
\]
Thus, for any $z=\sum_N a_Nu_N\in\mathcal M(F)$, we have
\[
 c_C z
 = \sum_N \sum_{e\in C} a_N \epsilon_e u_{N\cup\{e\}},
\]
and all the matchings $N\cup\{e\}$ in this double sum are distinct.
Thus the coefficient of $u_{N\cup\{e\}}$ in $c_Cz$ is
$a_N\epsilon_e$.  Since $\epsilon_e\neq0$, the equality $c_Cz=0$
therefore forces $a_N=0$ for every $N$.
Hence, multiplication by $c_C$ really is injective on
$\mathcal M(F)$.

The map $\rho_G$ is surjective by the definition of $\mathcal P(G)$.
Each matching $N$ of $F$ satisfies
\[
 \rho_G(c_Cu_N)
 =p_N\sum_{e\in C}\epsilon_ep_e
 =0.
\]
Thus it remains only to prove exactness in the middle.

Choose an orientation of the edges and define a polynomial ring
\[
 S:=\kk[y_e:e\in E(G)].
\]
Consider the graded $\kk$-algebra homomorphism
\[
 \varphi:S\longrightarrow\kk[x_v:v\in V(G)],
 \qquad
 y_e \longmapsto p_e
 \qquad \left(\text{that is, }
 y_{ij}\longmapsto x_i-x_j\right).
\]
The edge differences $p_e = x_i - x_j$
of a connected graph $G$ span a vector space of dimension
$|V(G)|-1$.  Since $G$ is unicyclic, $|E(G)|=|V(G)|$, so the space of
linear dependences among the differences $p_e=x_i-x_j$, for
$e=(i,j)\in E(G)$, is one-dimensional.  The cycle relation
$\sum_{e\in C}\epsilon_ep_e=0$ is a nonzero such dependence,
and consequently spans all of them.  That is, the
kernel of the degree-one part of $\varphi$ is spanned by
\[
 \ell:=\sum_{e\in C}\epsilon_e y_e.
\]
It follows that
\begin{equation}\label{eq:unicyclic-kernel-phi}
 \ker\varphi=(\ell).
\end{equation}
Indeed, extend $\ell$ to a basis of $S_1$; the images of the remaining basis
vectors are linearly independent, so $S/(\ell)$ is the polynomial ring on
those images.

For each $k \geq 0$, define a $\kk$-vector subspace $U_k$ of $S$ by
\[
 U_k:=\Span_{\kk}\{y_M:M\in\Match_k(G)\},
 \qquad \text{ where }
 y_M:=\prod_{e\in M}y_e,
\]
and put $U:=\bigoplus_{k\geq0}U_k$.  The basis-preserving identification
$u_M\leftrightarrow y_M$ identifies $\mathcal M(G)$ with $U$ as graded vector
spaces, and under this identification $\rho_G$ is the restriction of
$\varphi$ to $U$, while the element $c_C$ of $\mathcal M(G)$ corresponds
to the element $\ell$ of $U$.  Thus exactness in the middle of
\eqref{eq:unicyclic-exact-sequence} is equivalent to
\begin{equation}\label{eq:unicyclic-intersection}
 U\cap(\ell)=\ell\,U(F),
\end{equation}
where
\[
 U(F):=\Span_{\kk}\{y_N:N\text{ is a matching of }F\}.
\]

The inclusion $\ell U(F)\subseteq U\cap(\ell)$ is immediate.  For the reverse
inclusion, take a homogeneous element
\[
 f=\ell q\in U.
\]
Every monomial occurring in $f$ is a matching monomial (i.e., a monomial
of the form $y_M$ for a matching $M$ of $G$), hence is squarefree.
Fix $e\in C$.  Since the coefficient of $y_e$ in $\ell$ is nonzero and $S$
is a domain,
\[
 \deg_{y_e}(\ell q)=\deg_{y_e}q+1
 \qquad(q\neq0).
\]
As $\deg_{y_e}f\leq1$ (because every monomial occurring in $f$
is squarefree), we thus obtain $\deg_{y_e}q=0$.  Doing this for every
$e\in C$ shows that $q$ involves no cycle variables (i.e.,
variables of the form $y_e$ with $e \in C$).

Write
\[
 q=\sum_\alpha c_\alpha y^\alpha.
\]
Because the $y^\alpha$ contain no cycle variables, the monomials
$y_e y^\alpha$, as $(e,\alpha)$ varies with $e\in C$, are pairwise distinct.
Hence there is no cancellation in
\[
 \ell q=\sum_{e\in C}\sum_\alpha
          \epsilon_ec_\alpha y_e y^\alpha.
\]
Since this polynomial lies in $U$, every term with $c_\alpha\neq0$ must be a
matching monomial.  It follows that $y^\alpha=y_N$ for a matching $N$ of
$G$.  Moreover, $N$ cannot meet any vertex of $C$: if an edge of $N$ met
$v\in V(C)$, then choosing a cycle edge $e$ incident with $v$ would make
$y_ey_N$ a non-matching monomial.  Thus $N$ is a matching of
$F=G-V(C)$.  Therefore $q\in U(F)$, proving
\eqref{eq:unicyclic-intersection} and hence the exact sequence.

Taking degree $k$ in \eqref{eq:unicyclic-exact-sequence} gives
a short exact sequence
\[
 0\longrightarrow\mathcal M(F)_{k-1}
 \longrightarrow\mathcal M(G)_k
 \longrightarrow P_k(G)
 \longrightarrow0.
\]
Therefore
\[
 \dim P_k(G)=m_k(G)-m_{k-1}(F).
\]
Finally, summing over $k$ and using
$\dim\mathcal D(G)_k=\dim P_k(G)$ gives
\[
 \Hilb(\mathcal D(G),t)
 =\mathfrak m_G(t)-t\mathfrak m_F(t).
 \qedhere
\]
\end{proof}

\begin{proposition}[Matching-equivalent counterexamples]
\label{prop:matching-polynomial-counterexample}
\leavevmode
\begin{enumerate}
\item[(a)] There are two graphs $G$ and $H$ with $4$ vertices each such that
\[
 m_k(G)=m_k(H)\quad\text{for all }k\geq0,
 \qquad \text{ but } \quad
 \Hilb(\mathcal D(G),t)\neq\Hilb(\mathcal D(H),t).
\]
\item[(b)] There are two connected graphs $G$ and $H$ with $5$ vertices each
such that
\[
 m_k(G)=m_k(H)\quad\text{for all }k\geq0,
 \qquad \text{ but } \quad
 \Hilb(\mathcal D(G),t)\neq\Hilb(\mathcal D(H),t).
\]
Moreover, $G$ and $H$ can be chosen with the same degree sequence.
\item[(c)] There are two $2$-connected graphs $G$ and $H$ with $6$ vertices each
such that
\[
 m_k(G)=m_k(H)\quad\text{for all }k\geq0,
 \qquad \text{ but } \quad
 \Hilb(\mathcal D(G),t)\neq\Hilb(\mathcal D(H),t).
\]
Moreover, $G$ and $H$ can be chosen with the same degree sequence.
\end{enumerate}
\end{proposition}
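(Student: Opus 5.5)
The plan is to exhibit explicit pairs and compute their Dunkl Hilbert series with the tools already available. By \cref{prop:forest-full}, forests have $\Hilb(\mathcal D(G),t)=\mathfrak m_G(t)$. By \cref{thm:unicyclic-hilbert}, a unicyclic graph has $\Hilb(\mathcal D(G),t)=\mathfrak m_G(t)-t\mathfrak m_F(t)$. For general graphs, \cref{thm:product-realization} reduces the problem to computing ranks of the spans $P_k$.

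For (a), I take $H=K_{1,3}$ and $G=K_3\sqcup K_1$. Both graphs have matching polynomial $1+3t$, since no two edges are disjoint in either. Since $H$ is a forest, $\Hilb(\mathcal D(H),t)=1+3t$. On the other hand, $\Hilb(\mathcal D(G),t)=1+2t$ by \cref{ex:small-dunkl} and \cref{prop:basic-properties}(1).

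For (b), I compare two unicyclic graphs with degree sequence $(3,2,2,2,1)$. The first, $G$, is the $4$-cycle $1234$ with a pendant edge $15$; here $F$ is a single vertex, so $\mathfrak m_F=1$. The second, $H$, is the triangle $123$ with the path $3-4-5$ attached; here $F$ is the edge $45$, so $\mathfrak m_F=1+t$. Counting $2$-matchings gives four in each graph. In $G$ they are the two perfect matchings of $C_4$ together with $15$ paired with either of the two cycle edges avoiding vertex $1$. In $H$ they are $12\cdot34$, $12\cdot45$, $13\cdot45$ and $23\cdot45$. Thus $\mathfrak m_G=\mathfrak m_H=1+5t+4t^2$, while \cref{thm:unicyclic-hilbert} gives $1+4t+4t^2$ for $G$ and $1+4t+3t^2$ for $H$.

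For (c), both graphs will be $2$-connected on $6$ vertices. By \cref{thm:degree-two-saturation} and \cref{prop:basic-properties}(3), their Hilbert series then agree in degrees $\le 2$, with values $1,5,9$. So the difference must come from degree $3$, that is, from the rank of the span of the perfect-matching polynomials $p_M$ inside the $5$-dimensional $P_3(K_6)$. I would look for two graphs with equal $m_1,m_2,m_3$ and equal degree sequences, but different such ranks. A rank deficit can be forced by a Garnir relation \eqref{eq:garnir-square}. Concretely, if the graph contains all six edges on some $\{a,b,c,d\}$ together with an edge $ef$, then the three perfect matchings $\{ab,cd,ef\}$, $\{ac,bd,ef\}$, $\{ad,bc,ef\}$ are dependent. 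The competing graph should have the same number of perfect matchings with no such coincidence. I would run a small search over the $2$-connected $6$-vertex graphs, grouped by matching polynomial and degree sequence. For each candidate I would compute the rank of the matrix of coefficients of the $p_M$ in the $20$ squarefree cubic monomials, and then record one pair with differing ranks.

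This last step is the main obstacle: finding a suitable pair, and then certifying the two ranks by hand. I would try first pairs in which one graph contains $K_4$ plus a disjoint edge and the other has its perfect matchings spread out. To certify the larger rank, I would exhibit distinct leading monomials, as in \cref{rem:exterior-analogue}.
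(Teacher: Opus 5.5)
Your parts (a) and (b) are correct and use exactly the paper's pairs: $K_{1,3}$ versus $K_3\sqcup K_1$, and the $4$-cycle with a pendant edge versus the triangle with the path $3-4-5$, the latter handled by \cref{thm:unicyclic-hilbert}.

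Part (c), however, is not proved. The claim is existential, and you only describe a search; you neither exhibit a pair nor certify any ranks. The missing idea is that no search is needed, because the pair from (b) can be lifted by coning. Let $X^\ast$ be $X$ plus an apex joined to all $n$ vertices of $X$. A $k$-matching of $X^\ast$ either avoids the apex, or pairs it with one of the $n-2k+2$ vertices left free by a $(k-1)$-matching of $X$. Hence $m_k(X^\ast)=m_k(X)+(n-2k+2)m_{k-1}(X)$, so equal matching numbers stay equal. Equal degree sequences also stay equal, since all old degrees rise by $1$ and the apex gets degree $n$. Finally, the cone over a connected graph is $2$-connected. Coning your two $5$-vertex graphs therefore gives $2$-connected graphs with degree sequence $(5,4,3,3,3,2)$ and $(m_0,\ldots,m_3)=(1,10,19,4)$.

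The degree-$3$ check is then short, and it realizes precisely your heuristic. Take the apex to be $6$.

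In the cone over the triangle graph, the four perfect matchings are $\{12,34,56\}$ and the three perfect matchings of the $K_4$ on $\{1,2,3,6\}$, each completed by $45$. The latter three satisfy \eqref{eq:garnir-square} and span a $2$-dimensional space. The polynomial $p_{12}p_{34}p_{56}$ lies outside that span: its $x_1x_3x_6$-coefficient is nonzero, while every monomial of the other three contains $x_4$ or $x_5$. So $\dim P_3=3$.

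In the cone over the $4$-cycle graph, the perfect matchings are $\{12,34,56\}$, $\{14,23,56\}$, $\{15,23,46\}$ and $\{15,26,34\}$. Their polynomials are independent, since $x_1x_4x_5$, $x_1x_2x_5$, $x_1x_3x_4$ and $x_1x_2x_3$ each occur in exactly one of them. So $\dim P_3=4$.

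This yields $1+5t+9t^2+3t^3\neq 1+5t+9t^2+4t^3$. Until you supply such an explicit pair with verified ranks, part (c) of your proposal is a plan rather than a proof.
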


\begin{proof}
\textbf{(a)} Let
\[
 G_4=K_{1,3},
 \qquad
 H_4=K_3\sqcup K_1.
\]
Both graphs have matching numbers
\[
 (m_0,m_1,m_2)=(1,3,0).
\]
However, $G_4$ is connected while $H_4$ has two connected components, so
\cref{prop:basic-properties} gives
\[
 \Hilb(\mathcal D(G_4),t)=1+3t,
 \qquad
 \Hilb(\mathcal D(H_4),t)=1+2t.
\]
\medskip\noindent\textbf{(b)} Let $G_5$ be the triangle
on $\{1,2,3\}$ with the path $3-4-5$ attached, and let $H_5$ be the cycle
$1-2-3-4-1$ with the pendant edge $1-5$.  Thus
\begin{align*}
 E(G_5)=\{12,23,13,34,45\},\qquad
 E(H_5)=\{12,23,34,14,15\}.
\end{align*}
Both graphs are connected, both have degree sequence $(3,2,2,2,1)$, and
both have matching generating polynomial
\[
 \mathfrak m_{G_5}(t)=\mathfrak m_{H_5}(t)=1+5t+4t^2.
\]
For $G_5$, deleting the vertices of the unique triangle leaves the edge
$45$, whereas for $H_5$, deleting the vertices of the unique $4$-cycle
leaves only the isolated vertex $5$.  Hence
\cref{thm:unicyclic-hilbert} gives
\[
 \Hilb(\mathcal D(G_5),t)
 =(1+5t+4t^2)-t(1+t)=1+4t+3t^2,
\]
while
\[
 \Hilb(\mathcal D(H_5),t)
 =(1+5t+4t^2)-t=1+4t+4t^2.
\]
Thus the two graphs have the same matching data and even the same degree
sequence, but different Dunkl Hilbert series.

\medskip\noindent\textbf{(c)} Given a graph $X$ on $n$ vertices, let $X^\ast$ be its cone, obtained by
adjoining one new vertex adjacent to every vertex of $X$.  A $k$-matching of
$X^\ast$ either avoids the new vertex, or consists of a $(k-1)$-matching of
$X$ together with an edge from the new vertex to one of the
$n-2k+2$ vertices left unmatched.  Hence
\begin{equation}\label{eq:cone-matching-numbers}
 m_k(X^\ast)=m_k(X)+(n-2k+2)m_{k-1}(X).
\end{equation}
In particular, coning preserves equality of matching numbers.

Let $G_5,H_5$ be the two graphs constructed in part (b), and add a universal vertex
$6$ to each.  The resulting graphs $G_5^\ast,H_5^\ast$ are $2$-connected,
have the same degree sequence $(5,4,3,3,3,2)$, and by
\eqref{eq:cone-matching-numbers} have matching numbers
\[
 (m_0,m_1,m_2,m_3)=(1,10,19,4).
\]
By \cref{thm:degree-two-saturation}, both have
$\dim P_1=5$ and $\dim P_2=9$.

The four perfect matchings of $G_5^\ast$ give
\[
 p_{12}p_{34}p_{56},\quad
 p_{12}p_{36}p_{45},\quad
 p_{13}p_{26}p_{45},\quad
 p_{16}p_{23}p_{45}.
\]
The last three satisfy the four-vertex Garnir relation
\[
 p_{12}p_{36}p_{45}-p_{13}p_{26}p_{45}
   +p_{16}p_{23}p_{45}=0,
\]
while the first polynomial is independent of their span (for instance its
$x_1x_3x_6$-coefficient is nonzero and the other three have none).  Any two
of the last three are independent, so $\dim P_3(G_5^\ast)=3$.

The four perfect matchings of $H_5^\ast$ give
\[
 p_{12}p_{34}p_{56},\quad
 p_{14}p_{23}p_{56},\quad
 p_{15}p_{23}p_{46},\quad
 p_{15}p_{26}p_{34}.
\]
They are independent: the monomials
$x_1x_4x_5$, $x_1x_2x_5$, $x_1x_3x_4$, and $x_1x_2x_3$ occur, respectively,
in only one of the four displayed polynomials.  Hence
$\dim P_3(H_5^\ast)=4$.  Therefore
\[
 \Hilb(\mathcal D(G_5^\ast),t)=1+5t+9t^2+3t^3,
 \qquad
 \Hilb(\mathcal D(H_5^\ast),t)=1+5t+9t^2+4t^3.
\]
\end{proof}

\begin{remark}[Minimality]\label{rmk:matching-counterexample-minimality}
The examples in parts (a), (b), and (c) of
\cref{prop:matching-polynomial-counterexample} are minimal in their respective
classes: among pairs of graphs on the same number of vertices with equal
matching numbers but different Dunkl Hilbert series, the smallest possible
number of vertices is $4$ in general, $5$ for connected graphs, and $6$ for
$2$-connected graphs.
\end{remark}

\subsection{Problems}

\begin{problem}
Describe the kernel
\[
 I_G=\ker\left(
 \kk[z_v:v\in V]/(\sum_{v\in C}z_v:C\text{ a component})
 \longrightarrow\mathcal D(G)\right).
\]
It contains $z_v^2$ for every $v$, but in general has additional
graph-dependent relations.
\end{problem}

\begin{problem}
Find deletion recurrences for $\dim P_k(G)$.  Since matchings depend on
vertex-disjointness, vertex deletion and deletion of the closed edge
neighborhood are more natural here than ordinary matroid deletion--contraction.
\end{problem}

The results on saturation proved above leave a gap for $k\geq3$:
\[
 k\text{-linked}\Longrightarrow k\text{-saturated}
 \Longrightarrow k\text{-connected}.
\]
That is, we have a necessary combinatorial condition and a sufficient
combinatorial condition for a graph to be $k$-saturated, but no condition
that is both at once (except for $k\leq 2$).

\begin{problem}
Is there such a condition, or is it too much to ask for?
\end{problem}

\appendix

\section{Cut quotients and zonotopal algebras: a comparison}\label{sec:cut-quotients}

In this section, we leave the matching algebra $\mathcal M(G)$
behind and consider some other quotients of
$\kk[u_e : e \in E]$.
We shall define analogues of the Dunkl matching subalgebra in
these quotients, and state their Hilbert series.
Almost all the work here has been done in \cite{PS}
(sometimes implicitly), so we keep our proofs short.

Assume that the field $\kk$ has characteristic $0$.
We shall work in the \emph{square-zero edge
algebra}
\[
 \widehat\Phi_G:=\kk[u_e:e\in E]/(u_e^2:e\in E),
\]
which is larger than $\mathcal M(G)$ because it only
quotients out squares of edges but leaves
products of distinct incident edges intact.
We define the same
signed incidence sums $\theta_v$ as before, but now inside this algebra.
They generate a subalgebra
\[
 \mathcal C_G^{\mathrm{ex}}:=\kk[\theta_v:v\in V]\subseteq\widehat\Phi_G,
\]
which is known as
the external graphical zonotopal algebra (the forest version of the
Postnikov--Shapiro algebra); see \cite[\S11]{PS}, as well as the zonotopal
framework in \cite{AP,HR}.

We shall next see what happens when we impose further
monomial relations coming from edge cuts.
For a nonempty proper subset $S\subset V$, let
\[
 \delta(S):=\{e\in E:|e\cap S|=1\},
 \qquad
 d_S:=|\delta(S)|,
 \qquad
 \eta_S:=\prod_{e\in\delta(S)}u_e.
\]
Because edges internal to $S$ cancel in the incidence sum,
we have
\begin{equation}\label{eq:cut-sum}
 \Theta_S:=\sum_{v\in S}\theta_v
 =\sum_{e\in\delta(S)}\varepsilon_{S,e}u_e,
 \qquad \varepsilon_{S,e}\in\{\pm1\}.
\end{equation}
Since the $u_e$ are square-zero, the only surviving terms in the
$d_S$th power $\Theta_S^{d_S}$
use every cut edge $e \in \delta(S)$ exactly once.  Therefore
\begin{equation}\label{eq:cut-power}
 \Theta_S^{d_S}
 =d_S!\left(\prod_{e\in\delta(S)}\varepsilon_{S,e}\right)\eta_S
 =\pm d_S!\,\eta_S.
\end{equation}
In characteristic zero this shows, in particular, that every cut monomial
$\eta_S$ already belongs to $\mathcal C_G^{\mathrm{ex}}$.

The relation with the usual external and central power ideals makes the next
quotient transparent.  Put
\[
 R_V:=\kk[z_v:v\in V]/\left(\sum_{v\in V}z_v\right),
 \qquad
 \ell_S:=\sum_{v\in S}z_v.
\]
The standard graphical power-ideal presentations
(\cite[Theorem 11.1 and Corollary 10.5]{PS}) are
\begin{align}
 \mathcal C_G^{\mathrm{ex}}
 &\cong R_V/(\ell_S^{d_S+1}:\varnothing\ne S\subsetneq V),
 \label{eq:external-power-presentation}\\
 \mathcal C_G^{\mathrm{central}}
 &\cong R_V/(\ell_S^{d_S}:\varnothing\ne S\subsetneq V).
 \label{eq:central-power-presentation}
\end{align}
Here $z_v$ maps to $\theta_v$.  These presentations are the graphical cases
of the external and central zonotopal algebras; see
\cite[Secs.~9 and~11]{PS} and \cite{AP,HR}.

\begin{proposition}[The cut quotient inside the incidence-sum algebra]
\label{prop:central-quotient}
For a connected graph $G = (V,E)$ in characteristic zero,
\[
 \mathcal C_G^{\mathrm{ex}}/
 (\eta_S:\varnothing\ne S\subsetneq V)
 \cong\mathcal C_G^{\mathrm{central}}.
\]
Consequently, if $m=|E|$ and $r=|V|-1$, then
\[
 \Hilb\left(
 \mathcal C_G^{\mathrm{ex}}/(\eta_S),t\right)
 =\Hilb\left( \mathcal C_G^{\mathrm{central}} ,t\right)
 =t^{m-r}T_G(1,t^{-1}),
\]
where $T_G$ denotes the Tutte polynomial of $G$
(see, e.g., \cite{Goodall}).
\end{proposition}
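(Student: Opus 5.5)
Using the power-ideal presentation \eqref{eq:external-power-presentation}, identify $\mathcal C_G^{\mathrm{ex}}$ with $R_V/I^{\mathrm{ex}}$, where $I^{\mathrm{ex}}=(\ell_S^{d_S+1})$ and $z_v\mapsto\theta_v$. Under this identification, \eqref{eq:cut-sum} says that $\Theta_S$ is the image of $\ell_S$. Then \eqref{eq:cut-power} gives, in characteristic zero,
\[
 \eta_S=\pm\frac{1}{d_S!}\,\Theta_S^{d_S},
\]
so $\eta_S$ is the image of a nonzero scalar multiple of $\ell_S^{d_S}$. Hence the ideal $(\eta_S:\varnothing\ne S\subsetneq V)$ of $\mathcal C_G^{\mathrm{ex}}$ is the image of $(\ell_S^{d_S}:\varnothing\ne S\subsetneq V)$ in $R_V/I^{\mathrm{ex}}$. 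By the third isomorphism theorem the quotient becomes
\[
 R_V/\bigl(I^{\mathrm{ex}}+(\ell_S^{d_S})\bigr).
\]
Since $\ell_S^{d_S+1}=\ell_S\cdot\ell_S^{d_S}$, we have $I^{\mathrm{ex}}\subseteq(\ell_S^{d_S})$, and the sum is just $(\ell_S^{d_S}:S)$. By \eqref{eq:central-power-presentation}, the quotient is therefore $\mathcal C_G^{\mathrm{central}}$. The isomorphism is graded, being induced by $z_v\mapsto\theta_v$.

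Three bookkeeping points need care. First, $\eta_S$ is a product in $\widehat\Phi_G$, whereas the ideal in the statement is taken inside the subalgebra $\mathcal C_G^{\mathrm{ex}}$. This is legitimate because \eqref{eq:cut-power} already shows $\eta_S\in\mathcal C_G^{\mathrm{ex}}$, so we generate the ideal inside the subalgebra. Second, connectedness guarantees $d_S\ge1$ for every proper nonempty $S$, so no generator degenerates: we never get $\ell_S^0=1$, which would kill everything. Third, $S$ and $V\setminus S$ give the same cut, and $\ell_{V\setminus S}=-\ell_S$ in $R_V$, so the generating sets match exactly.

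For the Hilbert series I will cite the known result for central zonotopal algebras: the Hilbert series of $\mathcal C_G^{\mathrm{central}}$ is $t^{m-r}T_G(1,t^{-1})$, by \cite[Corollary~10.5]{PS} and Holtz--Ron \cite{HR}. The main obstacle is merely to confirm the convention, namely that the degree-$k$ dimension counts forests (independent sets) by external activity, i.e.\ the standard $T_G(1+0,t)$-type reversal. I would sanity-check this on $G=K_2$: there $m=r=1$, $T=x$, and the series is $1$, matching $R_V/(\ell_{\{1\}})$, which is $\kk$. I would also check a tree, where $T_G=x^{r}$ and the formula again gives $1$, consistent with $\ell_{\{v\}}^{1}$ killing every $z_v$ at leaves and hence, by induction, everything.
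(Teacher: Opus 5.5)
Your argument is correct and is essentially the paper's own proof. You use the external presentation to identify $\eta_S$ with a nonzero multiple of the image of $\ell_S^{d_S}$, note that adding these relations turns the external power ideal into the central one, and then cite the known central zonotopal Hilbert series. One small slip occurs in your convention remark: the degree-$k$ part of $\mathcal C_G^{\mathrm{central}}$ is counted by spanning trees (bases) with exactly $k$ externally passive edges, not by forests, since forests index the external algebra $\mathcal C_G^{\mathrm{ex}}$.
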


\begin{proof}
Under the presentation \eqref{eq:external-power-presentation}, the element
$\ell_S^{d_S}$ maps, by \eqref{eq:cut-power}, to a nonzero scalar multiple of
$\eta_S$.  Thus quotienting $\mathcal C_G^{\mathrm{ex}}$ by all $\eta_S$
adds exactly the relations $\ell_S^{d_S}=0$.  The resulting presentation is
\eqref{eq:central-power-presentation}.  The Hilbert-series formula is the
standard central zonotopal specialization of the Tutte polynomial
\cite{AP,HR}; e.g., combine \cite[Theorem 3.3 or Theorem 9.1]{PS}
with \cite[Theorem 3.12]{Goodall}.
\end{proof}

There is a second quotient by the same cut monomials $\eta_S$,
obtained by imposing
them on the \emph{whole} square-zero edge algebra rather than only on its
incidence-sum subalgebra.  This is larger, and it has an elementary basis.
Let
\[
 I_{\mathrm{cut}}:=(\eta_S:\varnothing\ne S\subsetneq V)
 \subseteq\widehat\Phi_G.
\]
For $F\subseteq E$, write $u_F:=\prod_{e\in F}u_e$.

\begin{proposition}[The ambient cut quotient]\label{prop:reliability-quotient}
For connected $G$, the set
\[
 \{u_F:G\setminus F\text{ is connected}\}
\]
is a basis of $\widehat\Phi_G/I_{\mathrm{cut}}$.  Hence
\begin{align*}
 \Hilb(\widehat\Phi_G/I_{\mathrm{cut}},t)
 &=\sum_{\substack{H\subseteq G;\\H\text{ connected spanning}}}
 t^{m-|H|}\\
 &=t^{m-r}T_G(1,1+t^{-1}).
\end{align*}
\end{proposition}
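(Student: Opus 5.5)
Since $\widehat\Phi_G$ has the squarefree monomials $u_F$ ($F\subseteq E$) as a $\kk$-basis and $I_{\mathrm{cut}}$ is generated by monomials, the plan is a standard-monomial argument for monomial ideals. In $\widehat\Phi_G$, the ideal $I_{\mathrm{cut}}$ is spanned by all $u_F$ that are divisible by some $\eta_S$, that is, by all $u_F$ with $F\supseteq\delta(S)$ for some nonempty proper $S\subset V$. (Products with non-squarefree monomials vanish, so the span of the surviving multiples is exactly this set.) Hence the quotient has as basis the residues of those $u_F$ such that $F$ contains no cut $\delta(S)$. It remains to identify this condition with connectivity of $G\setminus F$.

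The key combinatorial step is the equivalence
\[
 F\supseteq\delta(S)\text{ for some }\varnothing\ne S\subsetneq V
 \quad\Longleftrightarrow\quad
 G\setminus F\text{ is disconnected}.
\]
If $F\supseteq\delta(S)$, then $G\setminus F$ has no edge between $S$ and $V\setminus S$, so it is disconnected. Conversely, if $G\setminus F$ is disconnected, let $S$ be the vertex set of one of its components. Then $S$ is nonempty and proper, and every edge of $G$ leaving $S$ must lie in $F$, so $\delta(S)\subseteq F$. This gives the basis claim. The hypothesis that $G$ is connected is not even needed for this step; it only ensures that $F=\varnothing$ survives, so the quotient is nonzero.

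For the Hilbert series, the surviving $u_F$ of degree $|F|$ correspond bijectively to the connected spanning subgraphs $H=G\setminus F$ with $|H|=m-|F|$ edges. This gives the first equality. For the Tutte identity, I will use the subgraph expansion
\[
 T_G(x,y)=\sum_{A\subseteq E}(x-1)^{r-r(A)}(y-1)^{|A|-r(A)}.
\]
At $x=1$, only the sets $A$ with $r(A)=r$ survive, namely the edge sets of connected spanning subgraphs. Each contributes $(y-1)^{|A|-r}$, so $T_G(1,1+t^{-1})=\sum_H t^{-(|H|-r)}$. Multiplying by $t^{m-r}$ gives $\sum_H t^{m-|H|}$, as claimed.

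No step looks hard. The only point needing care is that, in the square-zero algebra, a monomial ideal still has the complementary squarefree monomials as a quotient basis, and this is immediate because $(u_e^2)+(\eta_S)$ is a monomial ideal in $\kk[u_e]$.
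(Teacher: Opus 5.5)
Your proposal is correct and follows the paper's route: a standard-monomial argument for the monomial ideal $I_{\mathrm{cut}}$ in the squarefree basis of $\widehat\Phi_G$, then the equivalence between $F$ containing some cut $\delta(S)$ and $G\setminus F$ being disconnected, then the corank--nullity expansion of $T_G(1,y)$ at $y=1+t^{-1}$. Your extra details are accurate refinements of the paper's terser argument: taking $S$ to be a component of $G\setminus F$ for the converse, and obtaining the $x=1$ specialization from the full rank generating function, where connectivity of $G$ is exactly what makes $r(A)=r$ equivalent to $(V,A)$ being connected.
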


\begin{proof}
The squarefree monomials $u_F$ form a basis of $\widehat\Phi_G$, and
$I_{\mathrm{cut}}$ is a monomial ideal.  Thus $u_F$ survives in the quotient
exactly when it is not divisible by any $\eta_S$, that is, exactly when
$\delta(S)\nsubseteq F$ for every nonempty proper $S\subset V$.  But a deletion
set $F$ contains a whole cut $\delta(S)$ if and only if $G\setminus F$ has no
edge from $S$ to $V\setminus S$, equivalently if and only if $G\setminus F$
is disconnected.  This proves the basis statement and the first Hilbert-series
formula.

For the second formula, use the corank--nullity expansion
\[
 T_G(1,y)=
 \sum_{\substack{H\subseteq G;\\H\text{ connected spanning}}}
 (y-1)^{|H|-r}
\]
(see, e.g., \cite[Proposition 3.10]{Goodall})
and substitute $y=1+t^{-1}$.
\end{proof}

Thus the same cut relations produce two different but closely related
objects.  They fit into the diagram
\begin{align}
 \begin{array}{ccc}
 \mathcal C_G^{\mathrm{ex}}&\subseteq&\widehat\Phi_G\\[2mm]
 \downarrow&&\downarrow\\[2mm]
 \mathcal C_G^{\mathrm{central}}&\subseteq&
 \widehat\Phi_G/I_{\mathrm{cut}}
 \end{array}
\end{align}
(whose bottom row is injective because of \cite[Corollary 10.5]{PS}%
\footnote{Indeed, the algebra $\mathcal B_G$ in \cite{PS} is
our $\mathcal C_G^{\mathrm{central}}$, while the algebra
$\Phi_G$ in \cite{PS} is our $\widehat\Phi_G/I_{\mathrm{cut}}$.
Thus, by identifying $\mathcal B_G$ with the subalgebra
$\mathcal C_G$ of $\Phi_G$, \cite[Corollary 10.5]{PS} shows
that $\mathcal C_G^{\mathrm{central}}$ embeds into
$\Phi_G = \widehat\Phi_G/I_{\mathrm{cut}}$.}).
The lower-left algebra has total dimension $T_G(1,1)$, the number of spanning
trees (by Proposition~\ref{prop:central-quotient}).
The lower-right algebra has total dimension $T_G(1,2)$, the number of
connected spanning subgraphs (by Proposition~\ref{prop:reliability-quotient});
its grading records the number of deleted edges.  More precisely, if
\[
 R_G(p)=\sum_{\substack{H\subseteq G;\\H\text{ connected spanning}}}
 p^{|H|}(1-p)^{m-|H|}
\]
is the all-terminal reliability polynomial, then
\[
 \Hilb(\widehat\Phi_G/I_{\mathrm{cut}},t)
 =(1+t)^mR_G((1+t)^{-1}).
\]
This is why the ambient algebra matters in this construction.

\end{document}